\def\ds{\displaystyle}
\def\eps{{\varepsilon}}
\def\spt{{\rm spt}}
\def\bfR{\mbox{\boldmath$R$}}
\def\rhom{{\rho_m}}
\numberwithin{equation}{section}
\begin{document}
\title{Structure of the fundamental solution of a nonconvex conservation
law
\thanks{This work was supported by the Korea Science and Engineering Foundation(KOSEF) grant funded by the Korea government(MOST) (No.R01-2007-000-11307-0).}}

\author{Yong-Jung Kim \and Young-Ran Lee\thanks{Corresponding Author}}

 \institute{Department of Mathematical Sciences, KAIST \\
291 Daehak-ro, Yuseong-gu, Daejeon 305-701, Korea\\
email: yongkim@kaist.edu\\
\\
\\
Department of Mathematics, Sogang University\\
35 Baekbeom-ro, Mapo-gu, Seoul 121-742, Korea.\\
email: younglee@sogang.ac.kr}

%
\titlerunning{Structure of the fundamental solution with nonconvex flux}
\date{\today}
%
%
\maketitle


\begin{abstract}
The structure of a signed fundamental solution of a conservation law is studied without the convexity assumption. The types of shocks and rarefaction waves are classified together with their interactions. A comprehensive picture of a global dynamics of a nonconvex flux is discussed in terms of characteristic maps and dynamical convex-concave envelopes.
\end{abstract}

\maketitle

\tableofcontents

\section{Introduction}\label{intro}

We consider a Cauchy problem of a scalar conservation law,
\begin{equation}\label{eqn}
\partial_t u+\partial_x f(u)=0,\quad u(x,0)=u_0(x),\quad x\in\bfR,\;t>0,
\end{equation}
where the flux $f$ is nonconvex and satisfies two hypotheses:
\begin{equation}\label{H1}
\begin{array}{c}\ds
\mbox{\it $f$ has a finite number of inflection points},\\
\ds
{f(u)\over u}\to\infty \mbox{~~~as~~~}u\to\infty.\\
\end{array}
\end{equation}
We assume a smooth flux $f\in C^1$ and, without loss of generality,
\begin{equation}\label{nomalization}
f(0)=f'(0)=0.
\end{equation}
The signed fundamental solution with mass $m>0$, denoted by $\rhom$ or simply by $\rho$, is the nonnegative entropy solution of (\ref{eqn}) that satisfies
\begin{equation}\label{IC-rho}
\lim_{t\to 0}\rho_m(x,t)=m\delta(x).
\end{equation}
This signed fundamental solution has been constructed in \cite{HaKim} using dynamical convex-concave envelopes. The purpose of this paper is to clarify the components of this fundamental solution and combine them to obtain the global dynamics of a nonconvex conservation law.

The fundamental solution plays the key role in the Cauchy problem. The convolution of the fundamental solution and the initial value is the solution for an autonomous linear problem. Even for a nonlinear problem the fundamental solution plays the key role. The Oleinik one-sided inequality for convex flux is written by
$$
f'(u)_x\le 1/t,\qquad t>0,
$$
and the fundamental solution satisfies the equality, $f'(\rhom)_x=1/t$.  In other words, this inequality, which gives the uniqueness and the sharp regularity to the conservation law, is basically a comparison to the fundamental solution. There have been several attempts to extend the inequality to nonconvex cases (see \cite{MR2405854,MR688972,MR1855004,MR2119939}). It is clear that the key of such an extension is in a better understanding of the fundamental solution. One may find an extension of such one-sided inequalities from the first author's recent work \cite{Kim14}. The asymptotic analysis is sometimes a study of a process how a solution turns into the shape of a fundamental solution eventually (see, e.g., \cite{MR621249,MR2127996,MR1987091,MR2373653}). This indicates that the fundamental solution reflects the intrinsic properties of PDEs.

The nonlinear scalar conservation law provides the shock wave theory in a simplest form. The behavior of a genuinely nonlinear flux or, equivalently, a convex flux is well understood (see \cite{MR0457947,MR0267257,MR0093653,MR0094541}). In particular, the Lax-Hopf transformation \cite{MR0093653} may make the solution even explicit. The main reason for this simplicity is that the information is destroyed along a shock, but never produced. A conservation law with a nonconvex flux has a quite different property that generates new information. Examples of nonconvex flux are from Buckley-Leverett, thin film and many others (see \cite{MR1725916,BuckleyLeverett,MR0328387}). The existence and the uniqueness of a bounded solution and the convergence of zero viscosity limit hold true for both convex and nonconvex cases (see \cite{MR0435615,MR715692,MR0267257,MR2001466}). The BV-boundedness holds for a uniformly convex case, but not for a general nonconvex case (see  \cite{MR690889,MR818862,MR785530,MR603391,MR0216338} for more regularity properties). The biggest difference is the complexity of the dynamics. Even though the structure of a solution has been studied for a case with a  single inflection point (see \cite{MR0435615,MR785530}), a nonconvex case is not well understood.

In this paper we investigate the structure of the signed fundamental solution of a nonconvex conservation law with a flux that satisfies (\ref{H1}). In Section \ref{sect.lemma} a structural lemma, Lemma \ref{lem.structure}, is introduced, which gives basic relations between convex-concave envelopes and the structure of   fundamental solution. This lemma is a summery of such relations in \cite{HaKim}. In Section \ref{sect.shocks}, the types of shocks are classified. There are four kinds of shocks, which are genuine shock, left contact, right contact and double contact. Each of these four kinds of shocks can be an increasing or decreasing one. Hence there are eight possibilities. For the convex flux case, there exists only one, which is a decreasing genuine shock. The dynamics of these shocks are introduced in Section \ref{sect.dynamics}. Branching is a phenomenon that a single shock is divided into two smaller shocks and merging is a one that two shocks are combined into a single shock of a smaller size. If the flux changes its sign as in (\ref{FluxForTrans}), the genuine shock can become a left or right contact. We call this phenomenon a transforming. In Section \ref{sect.rarefaction}, the types of rarefaction waves are discussed. For a convex flux case, a fundamental solution may have only a centered wave fan centered at the origin $(x,t)=(0,0)$. However, for a nonconvex flux case, the centered rarefaction wave fan can be placed at any place. Furthermore, there exists another kind of rarefaction wave which is called a contact rarefaction.

Finally, a complete scenario of an evolution of a fundamental solution is given in Section \ref{sect.full}. The flux and convex-concave envelopes of each stage are given in Figure \ref{fig6}. The whole evolution consists of eight stages which are divided by a merging, branching or transforming. The whole characteristic map is given in Figure \ref{fig7}. This characteristic map should be understood as an illustration. This dynamics of the fundamental solution can be observed by numerical computations. In fact one may find a numerical simulation result in Figure \ref{fig8}. In this computation the flux in Figure \ref{fig6} has been used.

\section{Structural lemma for fundamental solutions}\label{sect.lemma}

Define the speed of a shock that connects $u_1$ and $u_2$ by
$$
\sigma(u_1,u_2):={f(u_1)-f(u_2)\over u_1-u_2}.
$$
Suppose that an entropy solution $u(x,t)$ has a discontinuity along a curve $x=s(t)$. The discontinuity curve of an entropy solution or the discontinuity itself is called a \emph{shock} or shock curve. Then, the curve satisfies the Rankine-Hugoniot jump condition,
\begin{equation}\label{RH}
s^\prime (t) =\sigma(u_-(t),u_+(t)\,),\quad u_\pm(t)=\lim_{y\to s(t)\pm}u(y,t).
\end{equation}
A characteristic line $x=\xi(t)$ that emanates from a point $(x_0,t_0)$ satisfies
\begin{equation}\label{characteristic}
\xi^\prime(t) =f'(u(\xi(t),t)),\quad \xi(t_0)=x_0,\quad t\in I.
\end{equation}
If $I\subset[t_0,\infty)$, then the characteristic line $\xi(t)$ is called a forward one and, if $I\subset[0,t_0]$, then it is called a backward one.

The uniqueness of a signed fundamental solution has been shown by Liu and Pierre \cite[Theorem 1.1]{MR735207} for a Lipschitz continuous flux $\varphi$ such that $\varphi([0,\infty))\subset [0,\infty)$ and $\varphi(0)=0$. In the followings we first extend this uniqueness theorem under (\ref{H1}).

\begin{theorem}[Uniqueness of a signed fundamental solution]
\label{thm1} Suppose that the flux $f\in C^1(\bfR)$ satisfies (\ref{H1}). Then, there exists a unique fundamental solution $\rhom(x,t)$ that satisfies (\ref{eqn}) and (\ref{IC-rho}).
\end{theorem}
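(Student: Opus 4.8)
The plan is to reduce our uniqueness claim to the Liu--Pierre theorem \cite[Theorem 1.1]{MR735207} by a truncation/comparison argument that replaces the (possibly superlinear and sign-changing) flux $f$ by a Lipschitz flux mapping $[0,\infty)$ into itself, and then to show that the solution is insensitive to the modification. First I would recall that, since $\rhom$ is a nonnegative entropy solution with unit-type initial mass $m$, the conservation of mass together with the maximum principle gives an a priori $L^\infty$ bound: for each fixed $t>0$ there is $M=M(m,t)$ with $0\le \rhom(\cdot,s)\le M$ for $s\ge t$, and in fact the construction in \cite{HaKim} via dynamical convex-concave envelopes yields an explicit decay bound $\|\rhom(\cdot,t)\|_\infty\to 0$ as $t\to\infty$, hence a global-in-time bound away from $t=0$. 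Near $t=0$ the singular initial data forces large values, so the argument has to be localized in time: one proves uniqueness on $(0,T]$ for the solution emanating from a time slice $t=\tau$ where the data is already an $L^1\cap L^\infty$ function, and then lets $\tau\to 0$.

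The key steps, in order, are: (1) Fix $\tau>0$ and consider any two fundamental solutions $\rho,\tilde\rho$; by the first vanishing-viscosity/entropy-uniqueness theory for bounded data (\cite{MR0267257}, valid for any $C^1$ flux on bounded sets) it suffices to show $\rho(\cdot,\tau)=\tilde\rho(\cdot,\tau)$ for all small $\tau$. (2) Establish a uniform bound $0\le \rho,\tilde\rho\le M_\tau$ on $\bfR\times[\tau,\infty)$ using mass conservation plus the comparison principle; the superlinearity hypothesis ${f(u)/u\to\infty}$ enters here to guarantee finite propagation of the ``mass'' and to rule out escape to infinity. (3) Replace $f$ on $[0,M_\tau]$-irrelevant regions: define $\tilde f$ to agree with $f$ on $[0,M_\tau]$, to be extended Lipschitz-continuously and monotonically for $u>M_\tau$, and to be set equal to $0$ for $u\le 0$ (using the normalization $f(0)=f'(0)=0$), so that $\tilde f$ is globally Lipschitz and $\tilde f([0,\infty))\subset[0,\infty)$. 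Since both $\rho$ and $\tilde\rho$ take values in $[0,M_\tau]$, they are also the entropy solutions for the flux $\tilde f$ with the same Dirac initial data (the entropy inequalities only see the values of the flux on the attained range). (4) Apply Liu--Pierre to $\tilde f$ to conclude $\rho=\tilde\rho$, and finally invoke the already-cited existence from \cite{HaKim} to get the ``there exists'' half.

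The main obstacle I anticipate is step (2)--(3): making precise that the two solutions genuinely share a uniform range on which $f$ and $\tilde f$ coincide, \emph{before} invoking any uniqueness — this is a mild circularity that must be broken by using only the comparison principle and mass conservation, which hold for bounded entropy solutions of any $C^1$ conservation law independently of uniqueness of the singular problem. One must also check that the $\tau\to 0$ limit is harmless: both solutions attain the same initial trace $m\delta$, and the $L^1_{\loc}$-contraction for the bounded Cauchy problem on $[\tau,\infty)$ then propagates equality of the time-$\tau$ slices to all later times, so it is enough that $\|\rho(\cdot,\tau)-\tilde\rho(\cdot,\tau)\|_{L^1}\to 0$, which follows from both converging to $m\delta$ in the sense of measures together with the uniform $L^1$ bound and the structure (finitely many shocks and rarefactions) supplied by Lemma \ref{lem.structure}. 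A secondary technical point is the non-Lipschitz nature of $f$ at infinity, handled entirely by the truncation, so that no growth condition is actually used beyond guaranteeing the a priori bound $M_\tau<\infty$.
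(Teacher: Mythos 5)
Your proposal has two genuine gaps, and each is fatal on its own. First, the truncation in step (3) does not actually produce a flux satisfying the Liu--Pierre hypothesis $\tilde f([0,\infty))\subset[0,\infty)$: under (\ref{H1}) and (\ref{nomalization}) the flux $f$ is allowed to be negative on part of $(0,M_\tau)$ (the example in Section \ref{sect.full} is chosen precisely so that (\ref{FluxForTrans}) holds), and since you set $\tilde f=f$ on $[0,M_\tau]$, the modified flux still takes negative values there. The growth of $f$ at infinity is not the real obstruction --- local Lipschitz continuity already follows from $f\in C^1$ --- the obstruction is the sign condition, which your construction leaves untouched. The paper removes it by the additive shift $\varphi(u)=f(u)+bu$, with $b$ chosen so that $f(u)\ge-bu$ on $[0,\infty)$ (possible by (\ref{H1}) and (\ref{nomalization})), applies Liu--Pierre to $\varphi$ with the Dirac data directly, and transfers uniqueness back through the Galilean change of frame $v(x,t)=u(x-bt,t)$, which maps entropy solutions of the $\varphi$-equation onto entropy solutions of (\ref{eqn}) with the same initial trace. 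No truncation, no time slicing.

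Second, your $\tau\to0$ step is where the actual difficulty of source-solution uniqueness lives, and your justification does not close it. Since $\sup_x\rho(x,t)\to\infty$ as $t\to0$, the two competitors solve the $\tilde f$-equation only on $[\tau,\infty)$, where they are bounded by $M_\tau$; so Liu--Pierre, which is a statement about Dirac data at $t=0$, cannot be applied to them for the flux $\tilde f$, and you are left needing $\|\rho(\cdot,\tau)-\tilde\rho(\cdot,\tau)\|_{L^1}\to0$ as $\tau\to0$ to feed into the Kru\v{z}kov $L^1$-contraction. This does not follow from both slices converging weak-$*$ to $m\delta$ (compare $\delta_{1/n}$ and $\delta_{-1/n}$, which converge to the same Dirac mass while staying at total-variation distance $2$), and invoking Lemma \ref{lem.structure} for the competitor $\tilde\rho$ is circular: that lemma describes the particular solution constructed in \cite{HaKim} via envelopes, and asserting its conclusions for an arbitrary nonnegative entropy solution with Dirac initial trace presupposes exactly the uniqueness you are trying to prove. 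The clean way out is the paper's: arrange matters (by the shift and change of frame) so that the Liu--Pierre theorem applies to the singular problem itself rather than to a bounded surrogate.
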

\begin{proof}
It is enough to show the uniqueness of a nonnegative solution for $m>0$. The assumption (\ref{H1}) and (\ref{nomalization}) imply that there exists $b>0$ such that $f(u)\ge -bu$ for all $u\ge0$.
Let $\varphi(u)=f(u)+bu$. Then $\varphi(0)=0$ and $\varphi(u)\ge0$
for all $u\ge0$. Therefore, there exists a unique nonnegative solution to
\begin{equation}\label{eqn2}
u_t+\varphi(u)_x=0,\qquad\lim_{t\to0} u(x,t)=m\delta(x)
\end{equation}
(see \cite[Theorem 1.1]{MR735207}). Let $u$ be the nonnegative solution and $v$ be its translation given by $v(x,t)=u(x-bt,t)\ge0$. Then,
$$
v_t+\varphi(v)_x=u_t-bu_x+f(u)_x+bu_x=0.
$$
One can easily check that $v$ also satisfies the entropy and the Rankine-Hugoniot jump conditions if $u$ does. Therefore, $v$ is an entropy solution to (\ref{eqn}). Let $\tilde v$ be another nonnegative solution and $\tilde u$ be given similarly by $\tilde v(x,t)=\tilde u(x-bt,t)$. Then, since a nonnegative solution to (\ref{eqn2}) is unique, $u(x,t)=\tilde u(x,t)$ and hence $v(x,t)=\tilde v(x,t)$. $\hfill\qed$\end{proof}

Now we show a scaling argument using this uniqueness theorem.

\begin{lemma} Let $\rhom$ be the unique fundamental solution of mass $m>0$. Then,
\begin{equation}\label{Invariance}
\rho_1(x,t)=\rhom(mx,mt),\quad x\in\bfR,\  t>0.
\end{equation}
\end{lemma}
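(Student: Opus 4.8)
The plan is to prove the identity by exhibiting the right-hand side as \emph{a} nonnegative entropy solution of \eqref{eqn} with initial data $\delta(x)$, and then invoking Theorem~\ref{thm1} to conclude it equals the unique such solution, namely $\rho_1$. So define $w(x,t):=\rhom(mx,mt)$ for $x\in\bfR$, $t>0$, and verify three things: (i) $w$ solves the PDE $\partial_t w+\partial_x f(w)=0$ in the appropriate weak sense; (ii) $w$ satisfies the entropy condition; and (iii) $w$ satisfies the initial condition $\lim_{t\to0}w(x,t)=\delta(x)$, i.e. \eqref{IC-rho} with mass $1$.

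For (i), a direct chain-rule computation gives $\partial_t w(x,t)=m(\partial_t\rhom)(mx,mt)$ and $\partial_x f(w(x,t))=m(\partial_x f(\rhom))(mx,mt)$, so the PDE for $w$ at $(x,t)$ is exactly the PDE for $\rhom$ evaluated at $(mx,mt)$, hence holds; since the scaling $(x,t)\mapsto(mx,mt)$ is an isotropic dilation about the origin, it preserves shock curves, characteristic directions, and the Rankine--Hugoniot relation \eqref{RH} (the speed $\sigma$ is unchanged because the flux $f$ is untouched — only the independent variables are rescaled), so (ii) follows as well: the Lax/Oleinik-type entropy inequalities involve only the values $u_\pm$ and $f$, not the coordinates. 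The cleanest way to present (i)--(ii) is to note that the map is a symmetry of the conservation law that fixes the $t=0$ line, so it carries entropy solutions to entropy solutions.

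For (iii), I would check the initial data against test functions: for $\phi\in C_c(\bfR)$,
\[
\int_\bfR w(x,t)\phi(x)\,dx=\int_\bfR \rhom(mx,mt)\phi(x)\,dx=\frac1m\int_\bfR \rhom(y,mt)\phi(y/m)\,dy,
\]
and as $t\to0$ we have $mt\to0$, so by \eqref{IC-rho} for $\rhom$ the last integral tends to $\frac1m\cdot m\,\phi(0)=\phi(0)$, which is $\langle\delta,\phi\rangle$. Thus $w$ is a nonnegative entropy solution of \eqref{eqn} with unit mass Dirac initial data. By Theorem~\ref{thm1} (applied with $m=1$), such a solution is unique, so $w=\rho_1$, which is \eqref{Invariance}.

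The argument is essentially routine; the only point needing a little care is (ii), the preservation of the entropy condition under the dilation — one must be sure the relevant admissibility criterion is formulated purely in terms of the solution's values and the flux (e.g. Kruzhkov entropy pairs $(\,|u-k|,\ \sign(u-k)(f(u)-f(k))\,)$ transform covariantly under $(x,t)\mapsto(mx,mt)$ with the same positive factor $m$ on both the time and space derivatives, so the entropy inequality is reproduced verbatim). Since $m>0$, no sign issues arise and no orientation of shocks is reversed, so there is no genuine obstacle here; the main ``work'' is just bookkeeping the Jacobian factor $1/m$ in the test-function computation for the initial data.
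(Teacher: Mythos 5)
Your proposal is correct and follows essentially the same route as the paper: define $w(x,t)=\rhom(mx,mt)$, verify by the chain rule and a test-function computation (with the Jacobian factor $1/m$ cancelling the mass $m$) that $w$ is a nonnegative entropy solution with initial data $\delta(x)$, and conclude $w=\rho_1$ from the uniqueness theorem. Your extra remark that the isotropic dilation preserves the Rankine--Hugoniot and entropy conditions is a point the paper passes over silently, but it does not change the argument.
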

\begin{proof}
Let $u(x,t)=\rhom(mx,mt)$ with $m>0$. Then,
\begin{eqnarray*}u_t=m\partial_t \rhom(mx,mt),\quad u_x=m\partial_x \rhom(mx,mt),\\
u_t+f'(u)u_x=m\partial_t \rhom(mx,mt) +mf'(\rhom(mx,mt))
\partial_x \rhom(mx,mt)=0.\\
\end{eqnarray*}
Hence, $u(x,t)$ is a solution. Furthermore, since
$$\int_{\bf R} \phi(x)u(x,0)dx =\int_{\bf R} \phi(x)\rhom(mx,0)dx =\int_{\bf R}\phi(y/m)\delta(y)dy =\phi(0)
$$
for any test function $\phi(x)$, $u(x,t)$ is the fundamental solution with mass $m=1$, i.e., $u=\rho_1$. Therefore, the uniqueness of a signed solution gives the relation (\ref{Invariance}).
$\hfill\qed$
\end{proof}
This lemma indicates that it is enough to study the structure of $\rho_1(x,t)$ only. We will denote the fundamental solution by $\rho(x,t)$ when we don't need to specify the size of mass $m>0$.

In the construction of a signed fundamental solution, the maximum value $\bar\rho$ and the maximum point $\zeta(t)$, i.e.,
$$
\bar\rho(t):=\sup_x \rho(x,t),\quad \max\{\rho(\zeta(t)-,t),\rho(\zeta(t)+,t)\}=\bar\rho(t),
$$
are used as parameters and then decided implicitly. The convex and concave envelopes are respectively defined by
\begin{equation}\label{envelope}
 h(u;\bar\rho) :=\sup_{\eta\in A(0,\bar\rho)}\eta(u),\quad
 k(u;\bar\rho) :=\inf_{\eta\in B(0,\bar\rho)}\eta(u),
\end{equation}
where
\begin{eqnarray}\label{setAB}
A(0,\bar\rho):=\{\eta:\eta''(u)\ge0,~\eta(u)\le
f(u)\mbox{~~for~~}0<u<\bar\rho\},\\
B(0,\bar\rho):=\{\eta:\eta''(u)\le0,~\eta(u)\ge
f(u)\mbox{~~for~~}0<u<\bar\rho\}.
\end{eqnarray}
One can easily check that, for any fixed $\bar\rho>0$, $h(u;\bar\rho)$ and $k(u;\bar\rho)$ are convex and concave functions on the interval $(0,\bar\rho)$, respectively. Since we consider a flux with a finite number of inflection points, the domain $(0,\bar\rho)$ can be divided into a finite number of subintervals so that envelopes are identical to the flux or a line on each subinterval.

Finally, the fundamental solution is given by the inverse relation of
\begin{equation}\label{NewFsol}
\left\{\begin{array}{ll}
\bar h(\rho(x,t),t)&=x\mbox{~~for~} x<\zeta(t),\\
\bar k(\rho(x,t),t)&=x\mbox{~~for~} x>\zeta(t),\\
\end{array}
\right.
\end{equation}
where $\bar h$ and $\bar k$ satisfy
\begin{equation}\label{barhprime}
\partial_t\bar h(u,t)=\partial_u h(u;\bar \rho(t))),\quad
\partial_t\bar k(u,t)=\partial_u k(u;\bar \rho(t)))
\end{equation}
(see \cite[Section 4]{HaKim} for details). The structure of the fundamental solution is analyzed in the rest of the paper using the dynamics and the relations of the envelopes. The following lemma is a summary of the basic relations and dynamics. We will use this structural lemma in analyzing the components and dynamics of fundamental solutions.

\begin{lemma} [structural lemma for fundamental solutions] \label{lem.structure} Let $0=a_0<a_1<\cdots<a_{i_0}=\bar \rho(t)$ be the minimal partition of $[0,\bar \rho(t)]$ such that the convex envelope $h(u;\bar \rho(t))$ is either linear or identical to $f(u)$ on each subinterval $(a_{i},a_{i+1})$, $0\le i<i_0$. Similarly, let $0=b_0<b_1<\cdots<b_{j_0}=\bar \rho(t)$ be the minimal partition related to the concave envelope $k(u;\bar \rho(t))$. Let $\zeta_0(t)$ is the maximum point in the sense that $\bar \rho(t)=\max(\rho(\zeta_0(t)+,t),\rho(\zeta_0(t)-,t))$ and $\spt(\rho(\cdot,t))=[\zeta_-(t),\zeta_+(t)]$.

$(i)$ The linear parts of the envelopes are tangent to the flux, i.e.,
\begin{eqnarray*}\label{tangent}
h'(a_i;\bar \rho(t))=f'(a_i),\qquad i=1,\cdots,i_0-1,\\
k'(b_j;\bar \rho(t))=f'(b_j),\qquad j=1,\cdots,j_0-1.
\end{eqnarray*}

$(ii)$ The maximum $\bar \rho(t)$ is strictly decreasing as $t\to\infty$.

$(iii)$ The solution $\rho(x,t)$ increases in $x$ on the interval $(\zeta_-(t),\zeta_0(t))$. If $h(u;\bar \rho(t))$ is linear on $(a_i,a_{i+1})$, $\rho(x,t)$ has an increasing discontinuity that connects $u_-=a_i$ and $u_+=a_{i+1}$. If $f(u)=h(u;\bar \rho(t))$ on $(a_i,a_{i+1})$, $\rho(x,t)$ has a rarefaction profile that continuously increases from  $u=a_i$ to $u=a_{i+1}$.

$(iv)$ The solution $\rho(x,t)$ decreases in $x$ on the interval $(\zeta_0(t),\zeta_+(t))$. If $k(u;\bar \rho(t))$ is linear on $(b_j,b_{j+1})$, $\rho(x,t)$ has a decreasing discontinuity that connects $u_-=b_{j+1}$ and $u_+=b_{j}$. If $f(u)=k(u;\bar \rho(t))$ on $(b_j,b_{j+1})$, $\rho(x,t)$ has a rarefaction profile that continuously decreases from  $u=b_{j+1}$ to $u=b_{j}$.
\end{lemma}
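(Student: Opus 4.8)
The plan is to read off every assertion of the structural lemma directly from the explicit representation \eqref{NewFsol}–\eqref{barhprime} of the fundamental solution, together with the construction in \cite[Section 4]{HaKim}; essentially nothing here requires a genuinely new argument, only a careful unpacking of the envelope formalism.

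For part $(i)$, I would argue as follows. By definition of the minimal partition, each interior breakpoint $a_i$ (with $0<i<i_0$) separates a subinterval on which $h(\cdot;\bar\rho)$ is linear from an adjacent one on which it coincides with $f$, or separates two linear pieces of different slopes. Since $h$ is convex and $C^1$ away from corners, and since $h\le f$ with equality forced at the endpoints of any maximal linear segment, at such an $a_i$ the graph of $h$ touches the graph of $f$; convexity of $h$ together with $h\le f$ then gives $h(a_i;\bar\rho)=f(a_i)$ and $h'(a_i;\bar\rho)=f'(a_i)$ (a linear support line touching $f$ from below must be tangent). The same reasoning with inequalities reversed handles the concave envelope $k$ and the $b_j$'s. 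Part $(ii)$ is simply quoted from \cite{HaKim}: the mass is conserved and the support spreads, so $\bar\rho(t)=\sup_x\rho(x,t)$ is strictly decreasing; alternatively one differentiates the implicit relation defining $\bar\rho(t)$. I would cite this rather than reprove it.

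Parts $(iii)$ and $(iv)$ are the heart of the statement and are symmetric, so I would do $(iii)$ in detail and remark that $(iv)$ follows by the obvious reflection. On $x<\zeta_0(t)$ the solution is the inverse of $u\mapsto\bar h(u,t)$ by \eqref{NewFsol}. Because $h(\cdot;\bar\rho(t))$ is convex, $\partial_u h(u;\bar\rho(t))$ is nondecreasing in $u$, hence by \eqref{barhprime} so is $\partial_t\bar h(u,t)$; integrating in $t$ from the initial configuration shows $\bar h(\cdot,t)$ is itself convex in $u$, so $u\mapsto\bar h(u,t)$ is (weakly) increasing and its inverse $x\mapsto\rho(x,t)$ is increasing on $(\zeta_-(t),\zeta_0(t))$. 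On a subinterval $(a_i,a_{i+1})$ where $h(\cdot;\bar\rho(t))$ is \emph{linear}, $\partial_u h$ is constant there, so $\bar h(u,t)$ is affine in $u$ with a \emph{fixed} slope on that range; consequently the whole $u$-interval $[a_i,a_{i+1}]$ is mapped to a single $x$-value, which is exactly a jump discontinuity of $\rho(\cdot,t)$ connecting $u_-=a_i$ to $u_+=a_{i+1}$ — and this is an admissible (entropy) increasing shock precisely because $h$ is the convex envelope, i.e. the chord lies below $f$, which is Oleinik's condition. On a subinterval where $h=f$, the relation becomes $\bar h(u,t)=x$ with $\partial_t\bar h=f'$; differentiating shows $x=\bar h(u,t)$ solves the characteristic relation, so $\rho$ varies continuously and monotonically across $[a_i,a_{i+1}]$, i.e. it is a rarefaction profile. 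Part $(iv)$ is identical after replacing $h,A,a_i$ by $k,B,b_j$ and reversing the sense of monotonicity; note the labels $u_-=b_{j+1}$, $u_+=b_j$ reflect that on the decreasing side the larger state sits on the left.

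The main obstacle, such as it is, is bookkeeping rather than mathematics: one must be careful that the partition points $a_i$ and $b_j$ are \emph{time-dependent} through $\bar\rho(t)$, so the statements are pointwise in $t$, and one must make sure the ``fixed slope'' claim on a linear segment is compatible with \eqref{barhprime} — the slope $\partial_u h(u;\bar\rho(t))$ on a given linear piece equals $f'(a_i)=f'(a_{i+1})$ by part $(i)$ only when the two tangency conditions coincide, whereas in general a linear piece of $h$ has endpoints at \emph{different} tangency values and the chord slope $\sigma(a_i,a_{i+1})$ lies between $f'(a_i)$ and $f'(a_{i+1})$; the jump then genuinely separates two distinct characteristic speeds, which is what makes it a shock rather than a contact. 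I would therefore phrase $(iii)$–$(iv)$ so as to flag this and refer the reader to the shock classification in Section \ref{sect.shocks} for the finer distinctions, keeping the present proof at the level of ``what the envelope formalism immediately yields.''
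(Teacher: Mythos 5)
First, a point of comparison: the paper itself gives no proof of Lemma \ref{lem.structure} --- it is explicitly presented as ``a summary of such relations in \cite{HaKim}'', so your attempt to derive everything from the representation (\ref{NewFsol})--(\ref{barhprime}) is doing more than the paper does, and the overall strategy (tangency of the envelope at interior partition points for $(i)$, citing \cite{HaKim} for $(ii)$, inverting $\bar h$ and $\bar k$ for $(iii)$--$(iv)$) is the right one. Part $(i)$ is fine: $f-h\ge0$ with equality at an interior breakpoint and $f\in C^1$ forces $h'(a_i-)\ge f'(a_i)\ge h'(a_i+)$, which together with convexity pins down $h'(a_i)=f'(a_i)$.

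There is, however, a genuine gap at the central step of $(iii)$. You argue that on a linear piece $(a_i,a_{i+1})$ of $h(\cdot;\bar\rho(t))$ the function $\bar h(\cdot,t)$ is ``affine in $u$ with a fixed slope, consequently the whole $u$-interval is mapped to a single $x$-value.'' That is a non sequitur: an affine function with nonzero slope maps the interval to an interval, i.e.\ to a rarefaction, not a jump. What (\ref{barhprime}) actually gives you at time $t$ is that $\partial_t\bar h(u,t)=\partial_u h(u;\bar\rho(t))$ is constant in $u$ on the piece --- this is the statement that all level sets there move at the common Rankine--Hugoniot speed, not that they coincide. To get the jump you must show $\partial_u\bar h(u,t)=\int_0^t\partial_u^2 h(u;\bar\rho(s))\,ds=0$ on $(a_i,a_{i+1})$, which requires the extra fact that linear pieces persist backward in time: since $\bar\rho$ is decreasing and $h(\cdot;\bar\rho_2)\ge h(\cdot;\bar\rho_1)$ for $\bar\rho_2<\bar\rho_1$, the coincidence set $\{h=f\}$ can only grow as $t$ increases, so a maximal linear piece at time $t$ was contained in a linear piece at every earlier time, and the integrand vanishes for a.e.\ $s<t$. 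The same time-integration (applied to $\partial_u\bar h$ rather than to $\bar h$) is also what repairs your ``convex hence increasing'' slip: convexity of $\bar h(\cdot,t)$ does not imply monotonicity, but $\partial_u\bar h=\int_0^t h''(u;\bar\rho(s))\,ds\ge0$ does follow directly from convexity of each $h(\cdot;\bar\rho(s))$. With these two repairs the rest of $(iii)$ (the rarefaction case, where $\partial_u\bar h>0$ because $h=f$ near $u$ on a set of times of positive measure) and the mirrored argument for $(iv)$ go through as you describe.
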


Lemma \ref{lem.structure} gives the dynamics of the fundamental solution in terms of convex-concave envelopes. First, the number of discontinuities and their left and right hand limits are given by the convex-concave envelopes if the maximum $\bar \rho(t)$ at a specific time $t>0$ is known. However, we do not know the location of a discontinuity. The discontinuities are connected by rarefaction waves. However, if the flux is not convex, the structure of rarefaction waves are quite complicated and are not functions of $x/t$ anymore (see Section \ref{sect.rarefaction}).
\begin{figure}[htb]
\begin{minipage}[t]{5.5cm}
\centering \psfrag  {u(t)}{$\bar \rho(t)$} \psfrag  {b1}{$b_1$}
\psfrag  {b2}{$b_2$} \psfrag  {b3}{$b_3$} \psfrag  {b4}{$b_4$}
\psfrag  {a1}{$a_1$} \psfrag  {a2}{$a_2$} \psfrag  {a3}{$a_3$}
\psfrag  {f(u)}{$f(u)$} \psfrag  {0}{\scriptsize $0$}
\includegraphics[width=5.5cm]{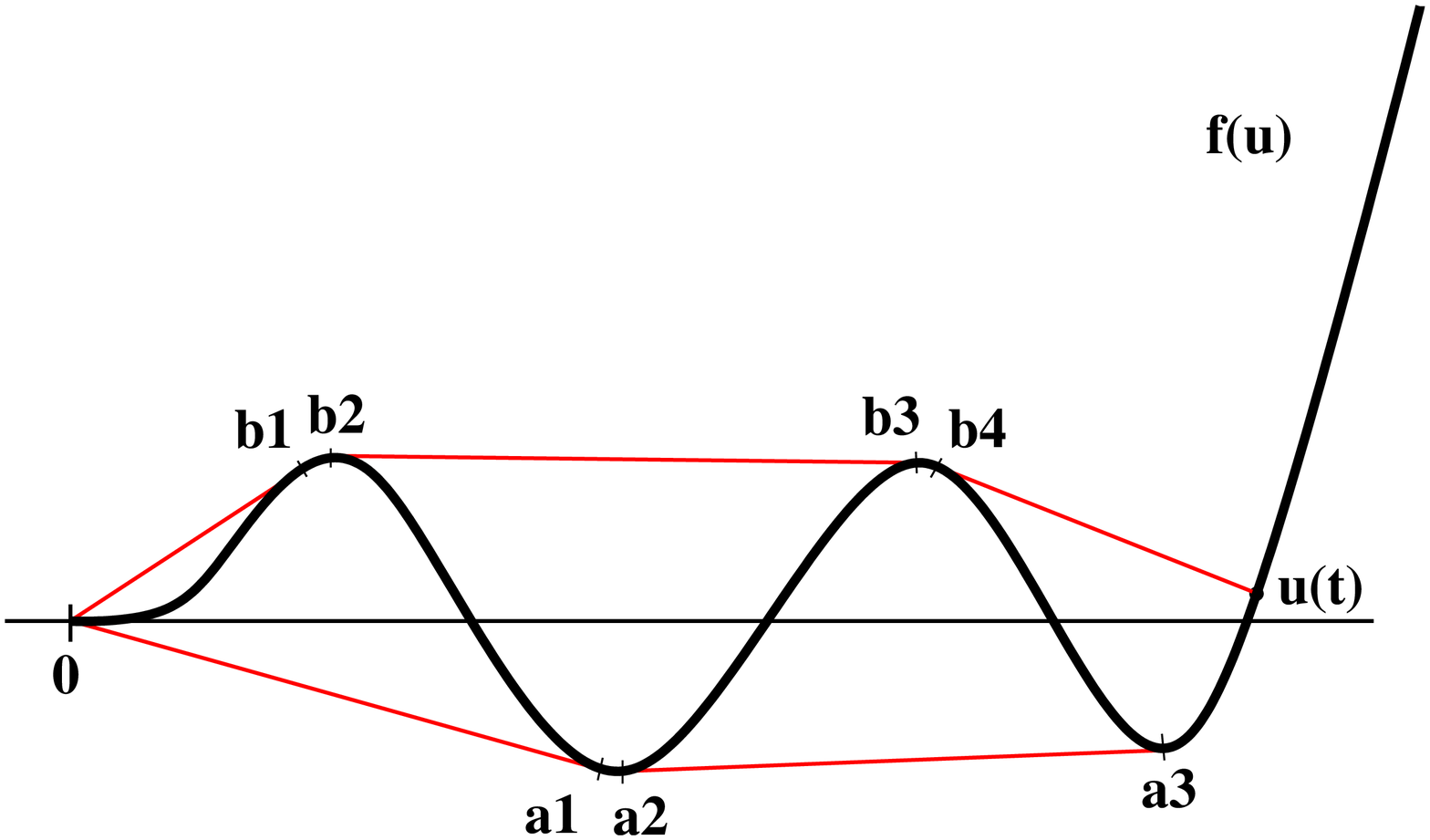}
(a) continuous change of concave envelopes
\end{minipage}
\hfill
\begin{minipage}[t]{5.5cm}
\centering \psfrag  {b1}{$b_1$} \psfrag  {b2}{$b_2$}
\psfrag  {b3}{$b_3$} \psfrag  {b4}{$b_4$}
\psfrag  {b5}{\hskip -8mm $\bar\rho(t-)=b_1$} \psfrag  {a1}{$a_1$}
\psfrag  {a2}{$a_2$}
\psfrag  {a3}{$a_3=\bar\rho(t+)$} \psfrag  {f(u)}{$f(u)$}
\psfrag  {0}{\scriptsize $0$}\psfrag {u(t)}{$\bar \rho(t)$}
\includegraphics[width=5.5cm]{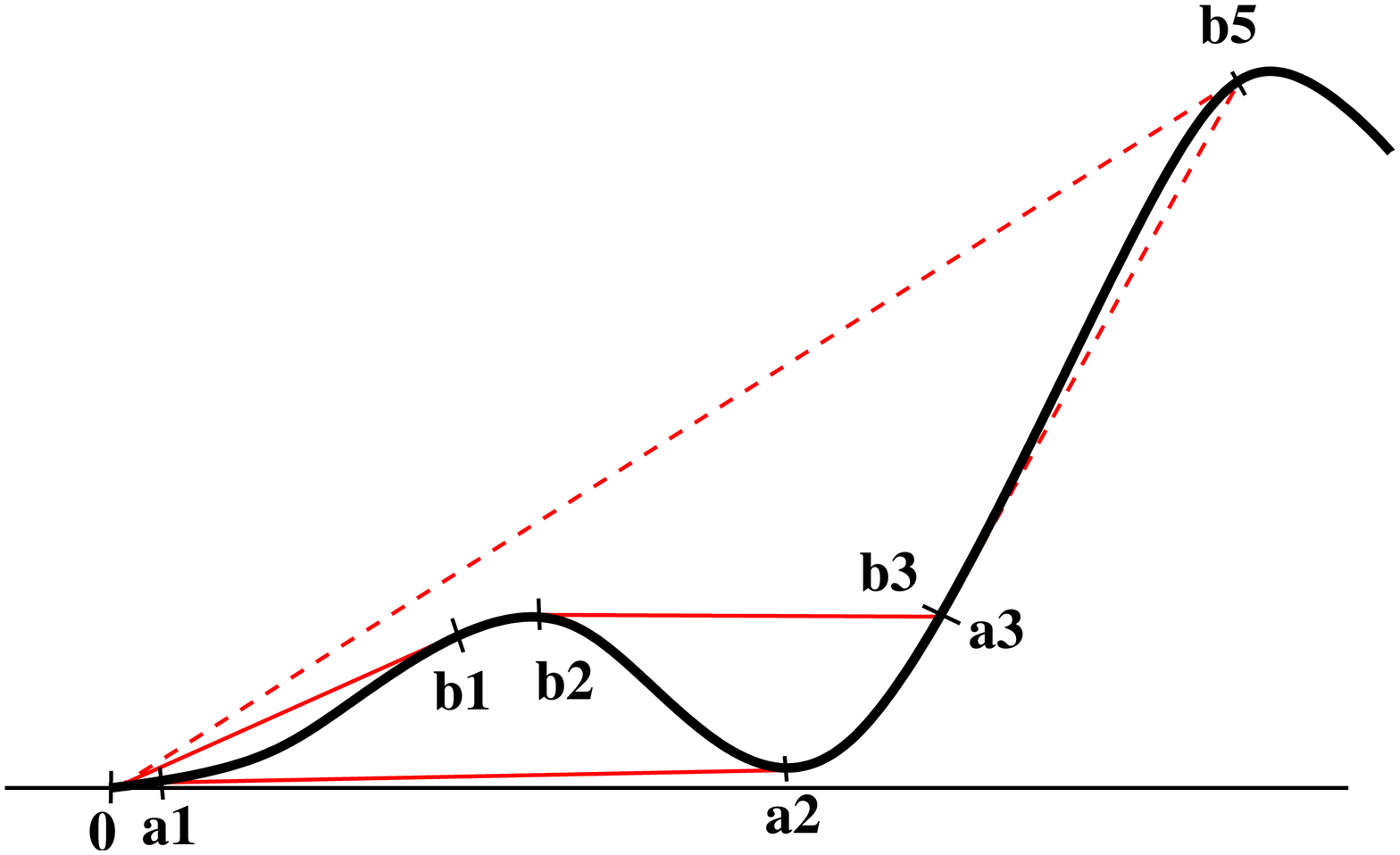}
(b) discontinuous change of concave envelopes
\end{minipage}
\caption{Examples of envelopes and minimal partitions. consist of $0,\
\bar \rho(t)$ and coordinates of the horizontal axis (or $u$-axis) of
the tangent points. If both of the envelopes meet with linear
parts as in (b), then the maximum jumps to the nearest interior
partition point $a_3$ in the figure.}\label{fig1}
\end{figure}

In Figure \ref{fig1}(a), convex-concave envelopes on a given domain $[0,\bar \rho(t)]$ are illustrated. Note that, the graph of the flux $f(u)$ is tangent to the $u$-axis at the origin since (\ref{nomalization}) is assumed. The minimal partition values $a_i$'s for the convex envelope are marked at the corresponding tangent points. Two linear parts of the convex envelope indicate that the fundamental solution have two increasing discontinuities. One of them jumps from $0$ to $a_1$ and the other from $a_2$ to $a_3$. These discontinuities satisfy the entropy condition. Similarly the concave envelope and the corresponding minimal partition $b_j$'s provide the decreasing shocks.

As the maximum $\bar \rho(t)$ decreases, the corresponding envelopes evolve continuously. However, if the end point $(\bar \rho(t),f(\bar \rho(t)))$ reaches to a tangent point, then the envelopes change discontinuously and an example is given in Figure \ref{fig1}(b). The envelopes in dashed lines are the case when $\bar \rho(t-)=b_1$. This implies that the maximum value $\bar \rho(t-)$ is connected to the value $a_3$ by an increasing shock and also connected to $0$ by a decreasing shock. In other words the solution has an isolated singularity. The case $a_3<\bar \rho<b_1$ is not admissible by the same reason. Therefore $\bar \rho$ jumps from $b_1$ to $a_3$ and the envelopes jump from the dashed ones to the solid ones in the figure. In particular the minimal partition for the concave envelope has new members and should be re-indexed as in the figure.

\section{Classification of shocks}\label{sect.shocks}

The dynamics of shocks is the key in understanding the structure of a fundamental solution of a conservation law. In this section we classify the types of shocks. Let $x=s(t)$ be a shock curve of $\rho(x,t)$ and $u_0^\pm=\lim_{\eps\downarrow0} u(s(t_0)\pm\eps,t_0)$ be one-sided limits. Let $x=\xi_+(t)$ be the maximal characteristic curve and $x=\xi_-(t)$ be the minimal one, where both of them emanate from the given point $(s(t_0),t_0)$. Then, the backward or the forward characteristic curves satisfy
\begin{equation}\label{speed}
\xi_+'(t_0)=f'(u_0^+),\quad \xi_-'(t_0)=f'(u_0^-),
\end{equation}
where the derivatives of characteristic curve are understood as one sided ones depending on its domain. We are interested in the characteristic curve that satisfies (\ref{speed}) since they are the ones that carry the information.

One may obtain the following well known relations from the Oleinik entropy condition,
\begin{equation}\label{inequalities}
f'(u^-_0)\ge s'(t_0)\ge f'(u^+_0).
\end{equation}
In the followings we classify the shocks into four types.

\subsection{Genuine shocks}

If both inequalities in (\ref{inequalities}) are strict, i.e.,
\begin{equation}\label{G}
f'(u^-_0)>s'(t_0)>f'(u^+_0),
\end{equation}
then the shock curve $x=s(t)$ is called a {\it genuine shock} and denoted by the letter `G' in figures. If a faster characteristic line $x=\xi_-(t)$ collides to the slower shock curve $x=s(t)$ from the left, it should come from the past (or as $t\to t_0-$). Similarly, if the slower characteristic line $x=\xi_+(t)$ collides to the shock curve from the right, then it  should also come from the past. Therefore, for a genuine shock case, characteristics satisfying (\ref{speed}) are backward ones and
\begin{equation}\label{genuineshock}
\xi_-'(t)>s'(t)>\xi_+'(t),\quad t_0-\eps<t<t_0
\end{equation}
for some $\eps>0$. If one may take the domain as $0<t<t_0$, then the characteristic curve is called global, which is always the case with a convex flux. However, if the flux is nonconvex, the characteristic curves are not necessarily global.

\begin{property}
A signed fundamental solution has at most one genuine shock, and it has one if and only if the convex or the concave envelope is a non-horizontal line. Furthermore, the genuine shock always connects the maximum and the zero.
\end{property}
\begin{proof}
Suppose that a shock connects a value of an intermediate partition point, say $a_i$ with $0\ne i\ne i_0$. Since the shock speed $s'(t)$ is given by the relation in (\ref{RH}), Lemma \ref{lem.structure}($i$) gives that
$$
s'(t)=h'(a_i;\bar \rho(t))=f'(a_i)=\xi'(t),\quad t_0-\eps<t<t_0.
$$
Therefore, at least one of the inequalities in (\ref{inequalities}) is an equality and hence the shock is not a genuine one. If a genuine shock connects the zero and the maximum, the corresponding envelope should be a line. Furthermore, since both envelopes can not be lines at the same time, a fundamental solution has at most one genuine shock at any given time.
$\hfill\qed$
\end{proof}

If the convex or concave envelope is a horizontal line, then due to the normalization (\ref{nomalization}), one of the inequalities in (\ref{inequalities}) is an equality. This is a transition stage of a genuine shock into a contact discontinuity which will be discussed in Section \ref{sect.dynamics}. Now suppose that a concave envelope is a non-horizontal line. Of course, the discontinuity connects the zero value and the maximum. We may easily see that if the line is not tangent to the graph of the flux, then it gives a genuine shock. Suppose that the line is tangent to the flux at the maximum as in Figure \ref{fig1}(b). Then the flux is locally concave near the maximum value $u=\bar \rho(t)$ and hence the convex envelope is also linear at the point $(\bar \rho(t),f(\bar \rho(t)))$. This implies that the maximum is an isolated singularity which is not admissible. Therefore, such envelopes do not exist. The same arguments are applied to the convex envelope.

If the flux is convex, its concave envelope is simply a non-horizontal line and gives a decreasing genuine shock all the time. Furthermore, there are no other types of shocks for the convex flux. Figure \ref{fig2}(a) is an illustration of a genuine shock.

\begin{figure}[htb]
\begin{minipage}[t]{2.6cm}
\centering \psfrag  {G.S.}{G}
\includegraphics[height=3.5cm]{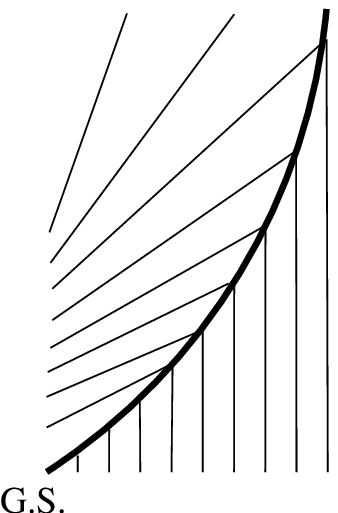}

(a) genuine shock
\end{minipage}
\hfill
\begin{minipage}[t]{2.8cm}
\centering \psfrag  {II}{D}
\includegraphics[height=3.5cm]{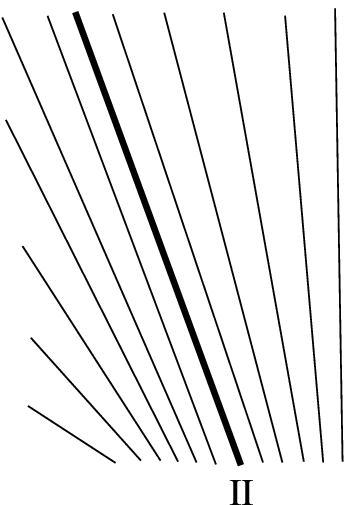}

(b) double contact
\end{minipage}
\hfill
\begin{minipage}[t]{2.6cm}
\centering \psfrag  {R}{R}
\includegraphics[height=3.5cm]{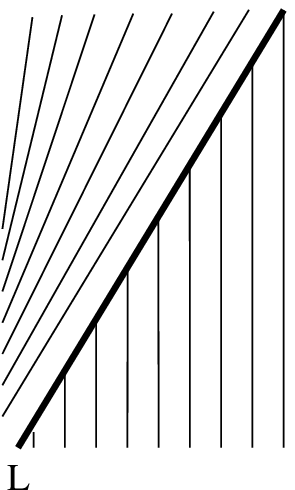}

(c) left contact
\end{minipage}
\hfill
\begin{minipage}[t]{2.6cm}
\centering \psfrag  {L}{L}
\includegraphics[height=3.5cm]{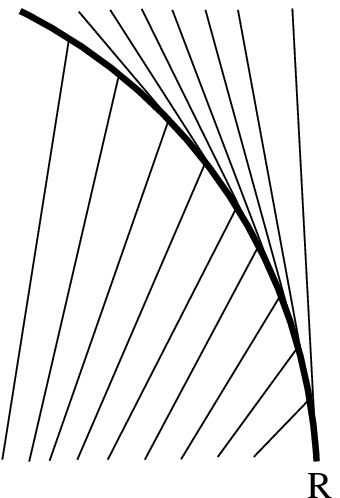}

(d) right contact
\end{minipage}
\caption{[The horizontal axis is for space $x$ and the vertical axis is for time $t$.] Shocks of a nonconvex scalar conservation law are classified into four types.} \label{fig2}
\end{figure}

\subsection{Contact shocks}

A shock is called a \emph{contact} if it is not a genuine shock. If both inequalities in (\ref{inequalities}) are equalities, i.e.,
\begin{equation}\label{D}
f'(u^-_0)=s'(t_0)=f'(u^+_0),
\end{equation}
then the shock is called a \emph{double sided contact} or simply \emph{double contact} and denoted by `D' in figures. One can easily find such a discontinuity from convex-concave envelopes. Let $a_i,\ i=0,\cdots,i_0,$ be the minimal partition in Lemma \ref{lem.structure} related to the convex envelope $h$. One can easily see that if $h$ is linear in an interior subinterval $(a_i,a_{i+1})$ (i.e., $a_i\ne0$ and $a_{i+1}\ne\bar \rho(t)$), then Lemma
\ref{lem.structure}($i$) implies that (\ref{D}) is satisfied. In this case the shock is placed between two rarefaction waves. Since the shock speed is constant until the tangent point $a_{i+1}$ stays in the partition, the double sided contact is a line parallel to adjacent characteristic lines (see Figure \ref{fig2}(b)). One may similarly consider double sided contacts related to the concave envelope which is omitted.

If one of the inequalities in (\ref{inequalities}) is an equality and the other one is a strict inequality, then we call it a \emph{single sided contact}. If
\begin{equation}\label{L}
f'(u^-_0)=s'(t_0)>f'(u^+_0),
\end{equation}
then this single sided contact is called a \emph{left sided contact} or simply \emph{left contact} and denoted by `L' in figures. This means that the characteristic lines on the left hand side of the shock curve have the same speed as the one of the shock. Hence the characteristic lines are tangent to the shock curve from the left hand side  (see Figures \ref{fig2}(c)).

Similarly, if
\begin{equation}\label{R}
f'(u^-_0)>s'(t_0)=f'(u^+_0),
\end{equation}
then this single sided contact is called a \emph{right sided} or \emph{right contact} and denoted by `R' in figures.

\begin{property}
A signed fundamental solution has two or three single sided contacts counting a genuine shock as two. If there is no genuine shock at a moment, then there exist right  and left contacts for each at least.
\end{property}
\begin{proof}
A single sided contact should connect the maximum $\bar \rho(t)$ or the zero value to an interior partition value. If a discontinuity connects two interior partition points, then it is a double sided contact as discussed before. Since the maximum can not be connected by two shocks (see the comments following Figure \ref{fig1}(b)), the total number of contact shocks is at most three.
$\hfill\qed$
\end{proof}

\begin{property} Double sided contacts are lines. Single sided contacts connected to zero are lines. There exist exactly one shock connected to the maximum $\bar\rho(t)$, which is the only one that moves with a nonconstant speed.
\end{property}
\begin{proof}
We have already observed that the double sided contacts are lines. Single sided contacts connected to the zero value are also lines by the same reason that the linear part of the corresponding envelope is not changed as long as the contact is a single sided one.  For example, consider a right contact case. Then, since $0$ is the minimum, the contact is an increasing shock and hence the convex envelope is considered.  Since the shock speed $s'(t)$ is given by the relation in (\ref{RH}), Lemma \ref{lem.structure}($i$) gives that
$$
s'(t)=h'(a_1;\bar \rho(t))=f'(a_1),
$$
where $a_1$ is the first interior partition point, which is constant as long as the right contact remains as it is. Hence, the right contact is a line. The left contact case is similar (see Figure \ref{fig2}(c)).

Now consider a left contact that connects the maximum $\bar\rho(t)$. Then, it is an increasing shock that connects the maximum to the last interior partition point $a_{i_0-1}$. Hence the shock speed is
$$
s'(t)=h'(a_{i_0-1};\bar \rho(t))=f'(a_{i_0-1}),
$$
Since the maximum of the fundamental solution $\bar\rho(t)$ strictly decreasing, the slope of the convex envelope at the maximum point is strictly increasing. Hence the left contact is not a line, but is curved to right since the propagation speed is increasing. One can show the similar behavior for the right contacts and the difference is that the right contact is curved to left (see Figure \ref{fig2}(d)). The genuine shock is similar as the contacts that connects to the maximum value $\bar\rho(t)$.
$\hfill\qed$\end{proof}

According to the previous properties, there exists only one shock that connects the maximum $\bar \rho(t)$. This shock is a genuine shock or a single sided contact that moves along a curve. All the other shocks propagate with a constant speed. Hence, all the dynamics of shock waves are produced along the shock that connects the maximum $\bar\rho(t)$ which is the only curved one of the signed  fundamental solution.

\begin{example}
Consider the linear part of the concave envelope in Figure \ref{fig1}(a) that connects the maximum and an interior partition value $b_4$. First, note that the right hand side limit of the shock is $b_4$ since the concave envelope gives a decreasing shock. The speed of the characteristic line carrying this value is $f'(b_4)$ which is identical to the shock speed and hence the corresponding discontinuity is always a right contact. One can easily see that the slope of the linear part decreases as $\bar \rho(t)$ decreases (i.e., as $t$ increases). Therefore the shock curve makes a turn to the left hand side as $t$ increases like in Figure \ref{fig2}(d). Furthermore, the interior tangent value $b_4$ increases, which indicates that the range covered by rarefaction wave is increasing. In other words new information is produced and propagates to the future. Therefore, the characteristic line $x=\xi_+(t)$ touching the shock from the right hand side has a domain $t\in(t_0,t_0+\eps)$ for some $\eps>0$. In Figure \ref{fig2}(d) this kind of right contact has been illustrated. Even if the previous discussions are in terms of the concave envelope, one may repeat them for the convex envelope and obtain the dual statements.
\end{example}

\section{Dynamics of shocks}\label{sect.dynamics}
The convex-concave envelopes have one end at the origin and the other end at the maximum point $(\bar \rho(t), f(\bar \rho(t)))$ with $\bar \rho(t)=\sup_x \rho(x,t)$. Since the maximum $\bar \rho(t)$ decreases in time $t$, the envelopes and the corresponding minimal partitions changes. In the followings we consider the dynamics of shock curves by tracking these changes.

\subsection{Branching}\label{sect.bran}
A shock curve may split into two smaller shocks divided by a rarefaction wave and we call this phenomenon a \emph{branching}. Consider a shock curve that connects the maximum value $\bar \rho(t)$. Then it should be the genuine shock or a single sided contact. If the corresponding linear part of the envelope touches a hump of the graph of the flux $f(u)$ on its way (see Figures \ref{fig6}(b) and \ref{fig6}(c)\,), it will split into two linear parts with a convex or a concave part in between. Since both of these two linear parts belong to the convex envelope or concave envelope, both shocks are increasing ones or decreasing ones. In other words an increasing shock splits into two smaller increasing shocks and a decreasing one into two smaller decreasing ones.

One can easily see that, at the moment the branching process starts, the linear parts of the envelope corresponding to the incoming and outgoing shocks are all the same line. Hence, the slopes of shock curves at the branching point in the $xt$-plane are identical and hence they form smooth curves of branching as in Figure \ref{fig3}.

Since the incoming shock is connected to the maximum $\bar\rho(t)$, one of the outgoing shocks connects the maximum, which should be a single sided contact. If the incoming shock is a single sided contact, then the other outgoing shock is a double sided contact as in Figure \ref{fig3}(a). Therefore, we may conclude that if the incoming shock is of single sided, it splits into one single sided and one double sided contacts. Similarly, if the incoming shock is a
genuine shock, then it splits into two single sided contacts (see Figure \ref{fig3}(b)). Note that type D doesn't split.

\begin{figure}[htb]
\begin{minipage}[t]{5cm}
\centering \psfrag  {R}{R} \psfrag  {II}{D}
\includegraphics[height=3.5cm]{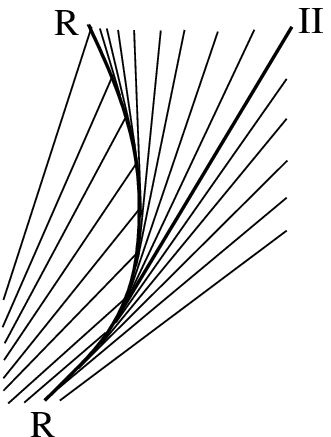}

(a)Branching (R$\to$R+D)
\end{minipage}
\hfill
\begin{minipage}[t]{5cm}
\centering \psfrag  {G.S.}{G} \psfrag  {R}{R}
\psfrag  {L}{L}
\includegraphics[height=3.5cm]{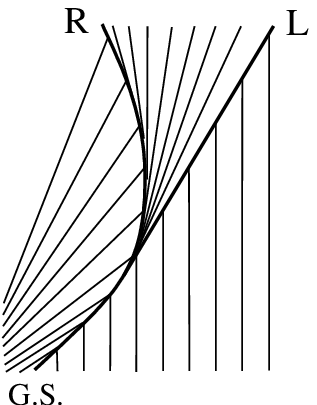}

(b)Branching (G$\to$R+L)
\end{minipage}
\caption{[The horizontal axis is for space $x$ and the vertical axis is for time $t$.]  There are three kinds of branching process. The third one is `L$\to$L+D'.}\label{fig3}
\end{figure}

\begin{property}\label{P.branching}
There are three situations of branching classified by the incoming and outgoing shocks. They are:
$(i)$ R$\to$R+D,
$(ii)$ L$\to$L+D,
$(iii)$ G$\to$R+L.
\end{property}

\subsection{Merging}\label{sect.merg}
Two shocks can be combined and then form a single shock. We call this phenomenon a \emph{merging}. In the process these two shocks have different monotonicity. One may see this phenomenon from the change of envelopes. As the maximum value $\bar \rho(t)$ decreases, two linear parts of convex and concave envelopes may meet at a point, say $(\bar \rho(t_0), f(\bar \rho(t_0)))$ (see Figures \ref{fig1}(b) or
\ref{fig6}(d)). However, in this case it gives a
removable jump (see \cite[Lemma 3$(iii)$]{HaKim}) and hence the maximum of the fundamental solution has a decreasing jump from $\bar \rho(t_0+)$ to $\bar \rho(t_0-)$. In this case $\bar \rho(t_0-)$ is the largest interior partition point (e.g., the point $a_3$ in Figure \ref{fig1}(b)). One can easily see that the slope of the linear parts of envelopes related these two incoming shocks and one outgoing shock are all distinct and hence the shock curves are not smooth in general. The phenomenon related to this sudden change of envelopes will be discussed later for the aspect of a rarefaction wave.

\begin{figure}[htb]
\begin{minipage}[t]{3.5cm}
\centering \psfrag  {G.S.}{G} \psfrag  {L}{L}
\includegraphics[height=3.5cm]{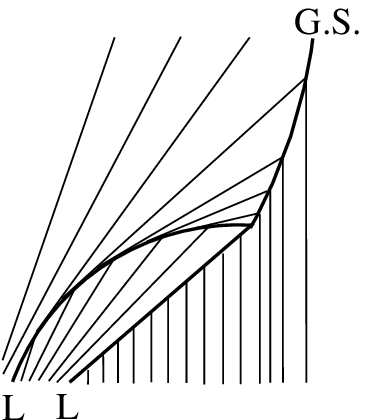}

(a)Merging (L+L$\to$G)
\end{minipage}
\hfill
\begin{minipage}[t]{3.5cm}
\centering \psfrag  {R}{R}
\psfrag  {L}{L} \psfrag  {II}{D}
\includegraphics[height=3.5cm]{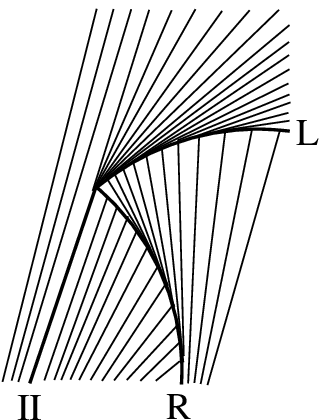}

(b)Merging (R+D$\to$L)
\end{minipage}
\hfill
\begin{minipage}[t]{3.5cm}
\centering \psfrag  {L}{L} \psfrag  {R}{R}
\includegraphics[height=3.5cm]{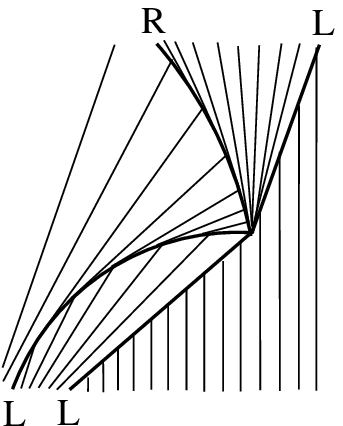}
(c)Merging+Branching
\end{minipage}
\caption{[The horizontal axis is for space $x$ and the vertical axis is for time $t$.] There are two kinds of merging. The last figure shows an example when merging and branching occur simultaneously.}\label{fig4}
\end{figure}

In Figure \ref{fig4}(a) two left contacts merge into a genuine shock. If a right contact is merged with a double sided contact, then a left contact is produced as in Figure \ref{fig4}(b). Note that merging is not an opposite process of branching. In a merging process, two shocks of different monotonicity produces a single smaller shock. Remember that, in a branching process, two shocks of the same monotonicity are produced. Another difference is that the shock
curves are not smooth after a merging process.

\begin{property}\label{P.merging}
There are four situations of merging classified by the incoming and outgoing shocks. They are:
$(i)$   R+R$\to$G,
$(ii)$  L+L$\to$G,
$(iii)$ R+D$\to$L, and
$(iv)$ L+D$\to$R.
\end{property}

\subsection{Merging $+$ Branching}

Merging and branching are basic phenomena in the dynamics of discontinuities. These two phenomena may appear at the same time. Since the envelopes may change discontinuously after a merging process, a branching may follow immediately after it. Consider the example in Figure \ref{fig1}(b), where two incoming shocks meet at a point. Then, the merging process in the figure produces one left contact and one right contact, where the corresponding figure is in Figure \ref{fig4}(c). Notice that the outgoing shocks are of the same monotonicity. In this example they are given by the concave envelope and hence they are decreasing ones. It is also possible that there are more than two
out going shocks. For example, if there are many wiggles in the inside hump of Figure \ref{fig1}(b), then there can be many outgoing shocks with same monotonicity with each other. However, those cases all extra smaller contacts are double sided contacts. From Properties \ref{P.branching} and \ref{P.merging}, we obtain the following property.

\begin{property}\label{P.branching+merging}
There are four possible situations of `branching after merging' classified by the incoming and outgoing shocks. They are:
$(i)$  R+R$\to$R+L,
$(ii)$  L+L$\to$R+L,
$(iii)$ R+D$\to$L+D,
$(iv)$ L+D$\to$R+D.
\end{property}

\subsection{Transforming}\label{sect.tranforming}

A shock may change its type without branching or merging and we call this phenomenon a \emph{transforming}. This phenomenon may appear only if \begin{equation}\label{FluxForTrans}
f\big([0,\infty)\big)\not\subseteq[0,\infty).
\end{equation}

The only possible case is that a genuine shock is transformed to a single sided contact. It always happens when a genuine shock changes its direction from the negative one to the positive one or in the other way. For example consider a genuine shock that moves at a negative speed as in Figure \ref{fig5}(a). If it stops and then moves to the positive direction, then it is not a genuine shock any more. It becomes a left contact as one can see from the figure. Similarly, if a genuine shock changes its direction from the positive one to the negative one,
then it becomes a right contact.

In Figure \ref{fig4}(a) two single sided contacts are merged into a genuine shock. If transforming appears simultaneously, then one may see the phenomenon that two contacts of single sided are merged into a single contact of single sided (see Figure \ref{fig5}(b)). Notice that the phenomenon in Figure \ref{fig5}(b) can be considered as a special case of merging process. However, instead of placing it in the section for merging, we have it in this transforming section since it is based on the transforming process. Furthermore, the transforming phenomena can be found under the extra hypothesis (\ref{FluxForTrans}). In fact, if the flux is nonnegative, then the speed of the genuine shock is positive and hence transforming phenomenon does not appear. This phenomenon can be found in the transition from Figure \ref{fig6}(f) to  \ref{fig6}(g).

\begin{figure}[htb]
\begin{minipage}[t]{4cm}
\centering \psfrag  {G.S.}{G} \psfrag  {L}{L}
\includegraphics[height=4cm]{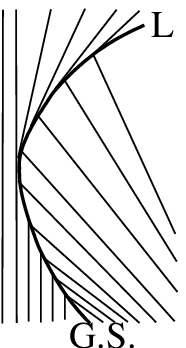}

(a) Transforming: G$\to$L
\end{minipage}
\hfill
\begin{minipage}[t]{6cm}
\centering \psfrag  {L}{L}
\psfrag  {R}{R}
\includegraphics[height=4cm]{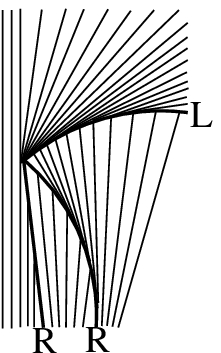}

(b) Merging+Transforming: R+R$\to$L
\end{minipage}
\caption{[The horizontal axis is for space $x$ and the vertical axis is for time $t$.] Transforming can be observed under an extra condition (\ref{FluxForTrans}).}
\label{fig5}
\end{figure}

\begin{property}\label{P.transforming+merging}
There are four possibilities under (\ref{FluxForTrans}) related to transforming. They are:
$(i)$  G$\to$R,
$(ii)$  G$\to$L,
$(iii)$ R+R$\to$L,
$(iv)$ L+L$\to$R.
\end{property}

\section{Structure of rarefaction waves}\label{sect.rarefaction}

There is no contact shock for a convex flux case and hence all the characteristics of a fundamental solution carrying the information of a non-zero value are emanated from the origin. Therefore, if a fundamental solution $\rho$ has a rarefaction profile at a point $(x,t)$, the speed of the characteristic line that passes through the point is $x/t$ and hence it should be satisfied
that $f'(\rho(x,t))=x/t$. Furthermore, since $f'$ is invertible if the flux is strictly convex, the rarefaction wave should be given by the following relation
\begin{equation}\label{convex-rarefaction}
\rho(x,t)=(f')^{-1}(x/t),\qquad a(t)\le x\le b(t),
\end{equation}
where $[a(t),b(t)]$ is the support of the fundamental solution $\rho(\cdot,t)$. However, if the flux is not convex, then there may exist contact shocks and hence there are various possibilities for the starting point of the characteristic line. Furthermore, since $f'$ is not invertible in the whole domain, one should clarify the correct profile that gives the rarefaction wave. In the followings we classify the rarefaction waves.

\subsection{Centered rarefaction wave fans}

There are two kinds of centered rarefaction waves. The first one is the one produced from the initial profile of the Dirac-measure and hence centered at the origin. Let $h(u;\infty)$ be the convex envelope of the flux and $0=a_0<a_1<\cdots<a_{i_0}<\infty$ be the minimal partition. Then the rarefaction wave is given as
\begin{equation}\label{initial-rarefaction}
\rho(x,t)=g_\infty(x/t),\qquad a(t)\le x\le b(t),\ 0<t<\eps,
\end{equation}
where the similarity profile $g_\infty(x)$ is a piecewise continuous function that satisfies $h'(g_\infty(x);\infty)=x$. Notice that the function $g_\infty(x)$ may have a discontinuity and hence $\rho(x,t)$ given by (\ref{initial-rarefaction}) may have a discontinuity which is actually contact. Therefore, the rarefaction wave in (\ref{initial-rarefaction}) should be understood as a sequence of rarefaction waves divided by contacts. In Figure \ref{fig7} an example of initial centered rarefaction wave can be found for $t>0$ small. In the figure one may find the wave fan bounded by a genuine shock and a right contact. One may also find the inside profile is divided by a double sided contact.

A centered rarefaction wave may also appear after a merging process. If a shock collides to another one, then the envelopes change discontinuously and a centered rarefaction wave may emerge. For example consider the merging process in Figure \ref{fig1}(b) and let $(x_0,t_0)$ be the merging point. Then the concave envelope jumps from the dashed one to the solid ones and a rarefaction part in the interval $(b_2,b_3)$ is added to the concave envelope, Figure \ref{fig1}(b), which generates a centered rarefaction wave given as
\begin{equation}\label{dynamic-rarefaction}
\rho(x,t)=g_1\Big({x-x_0\over t-t_0}\Big),\quad \xi(t)\le x\le
s(t),\quad t_0<t<t_0+\eps,
\end{equation}
where $g_1$ is the inverse function of the flux on the domain $(b_2,b_3)$ and the wave is bounded by a contact line of single sided and a characteristic
line, which are given by
$$
\xi(t)=x_0+f'(b_2)(t-t_0),\ s(t)=x_0+{ f(b_1)\over b_1
}(t-t_0),\ t_0<t<t+\eps.
$$
The wave fan which is between two outgoing contacts of single sided in Figure \ref{fig4}(c) and emanates from the branching point is a corresponding case.

One may also observe a centered rarefaction wave bounded by two characteristic lines. Consider the change of envelopes in Figure \ref{fig6}(d) after a merging. Then the interior partition point $a_2$ jumps from $a_2^-$ to $a_2^+$ and a rarefaction part in the interval $(a_2^-,a_2^+)$ is added to the convex envelope, which generates a centered rarefaction wave given by
\begin{equation}\label{dynamic-rarefaction2}
\rho(x,t)=g_2\Big({x-x_0\over t-t_0}\Big),\quad \xi_1(t)\le x\le
\xi_2(t), \quad t_0<t<t+\eps,
\end{equation}
where $g_2$ is the inverse function of the derivative of the convex envelope of the flux on the domain $(a_2^-,a_2^+)$ and the wave fan is bounded by two characteristic lines, which are given
by
$$
\xi_1(t)=x_0+f'(a_2^-)(t-t_0),\quad
\xi_2(t)=x_0+f'(a_2^+)(t-t_0),\quad t_0<t<t+\eps.
$$

\begin{remark}
The solution is continuous along the characteristic $\xi_1$, but not differentiable. This kind of regularity has been mentioned in Dafermos \cite{MR785530}. The rarefaction wave fan is bounded by a characteristic line at least one side. However, it is possible that there exist several contacts of type D inside of the fan.
\end{remark}

\begin{property}
If a portion of the graph of the flux is added to the envelopes after a merging phenomenon, a centered rarefaction wave fan appears.
\end{property}

\subsection{Contact rarefaction}

A centered rarefaction wave fan of a fundamental solution is produced instantly at the moment of the initial time or a merging phenomenon. On the other hand, a contact rarefaction wave is produced continuously along a single sided contact shock connected with the maximum value $\bar \rho(t)$. A typical example can be found in Figures \ref{fig6}(a) and \ref{fig6}(b). The concave envelope in Figures \ref{fig6}(a) shows that it is about the moment that the genuine shock splits into two contacts. The concave envelope at a later time is given in Figure \ref{fig6}(b), which shows that the rarefaction region $(b_1,b_2)$ is expanding. This indicates that new information is being produced and propagates to the future. On the other hand the rarefaction region $(a_3,\bar \rho(t))$ from the convex envelope is shrinking. In other words the information from the past is destroyed if it meets this shock.

In summary a contact rarefaction wave is produced by the contact shock of single sided which connects the maximum from one side. For example, the right contact R in Figure \ref{fig6}(b) connecting $b_2$ and $\bar \rho(t)$ is the corresponding one. This single sided contact erases the information of the past from one side and produces new information from the other side.

\section{An example for a global picture}
\label{sect.full}

This final section is designed to provide a complete characteristic map that shows all the dynamics of shocks and rarefaction waves discussed before. We take a flux in Figure \ref{fig6} which is complicated enough for this purpose and satisfies (\ref{FluxForTrans}). Since the change of an envelope is linked to each stage of a solution, all the dynamics of a solution can be interpreted in terms of envelopes. First eight figures in Figure \ref{fig6} show the dynamics of the envelopes corresponding to the possible eight stages of the fundamental solution. As an example to show this connection we put an illustration of a signed fundamental solution in Figure \ref{fig6}(i), which belongs to the second stage, Figure \ref{fig6}(b). More examples of fundamental solution can be found in \cite[Figures 6--8]{HaKim}, which are actually obtained by computing the equation numerically. A complete characteristic map corresponding to this flux is given in Figure \ref{fig7} with stage numbers on the left. In the rest of this section we investigate the relation between the flux and its characteristic map.

\begin{figure}[htb]
\begin{minipage}[t]{3.7cm}
\centering \psfrag  {R}{R} \psfrag  {II}{D} \psfrag  {G}{G}
\psfrag  {L}{L} \psfrag  {f(u)}{\footnotesize $f(u)$}
\psfrag  {0}{\scriptsize $0$} \psfrag  {u}{\footnotesize $u$}
\psfrag  {a2-}{\small $a_2^-$} \psfrag  {a2+}{\small $a_2^+$}
\includegraphics[width=4cm]{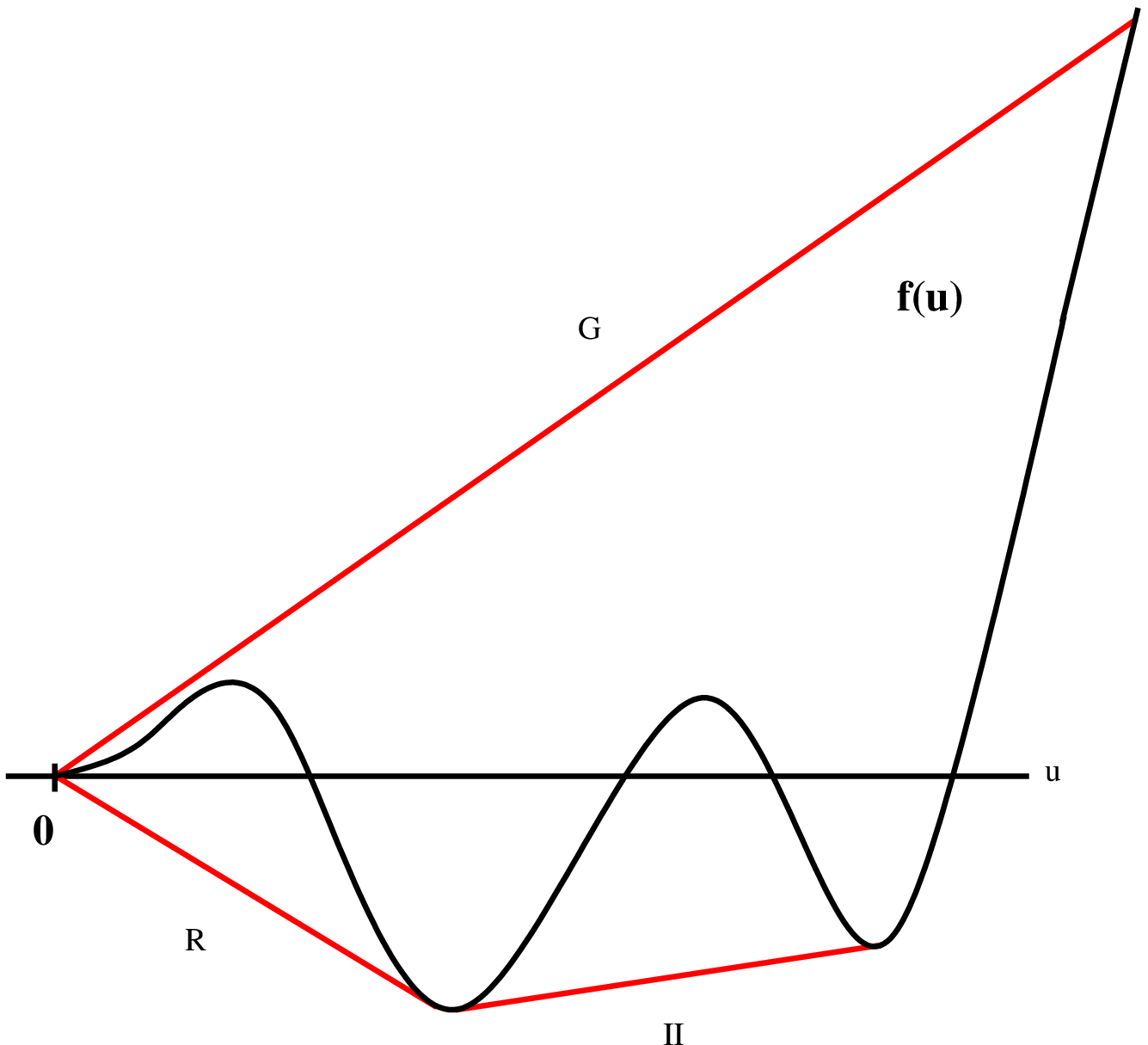}
(a) initial structure
\includegraphics[width=3.8cm]{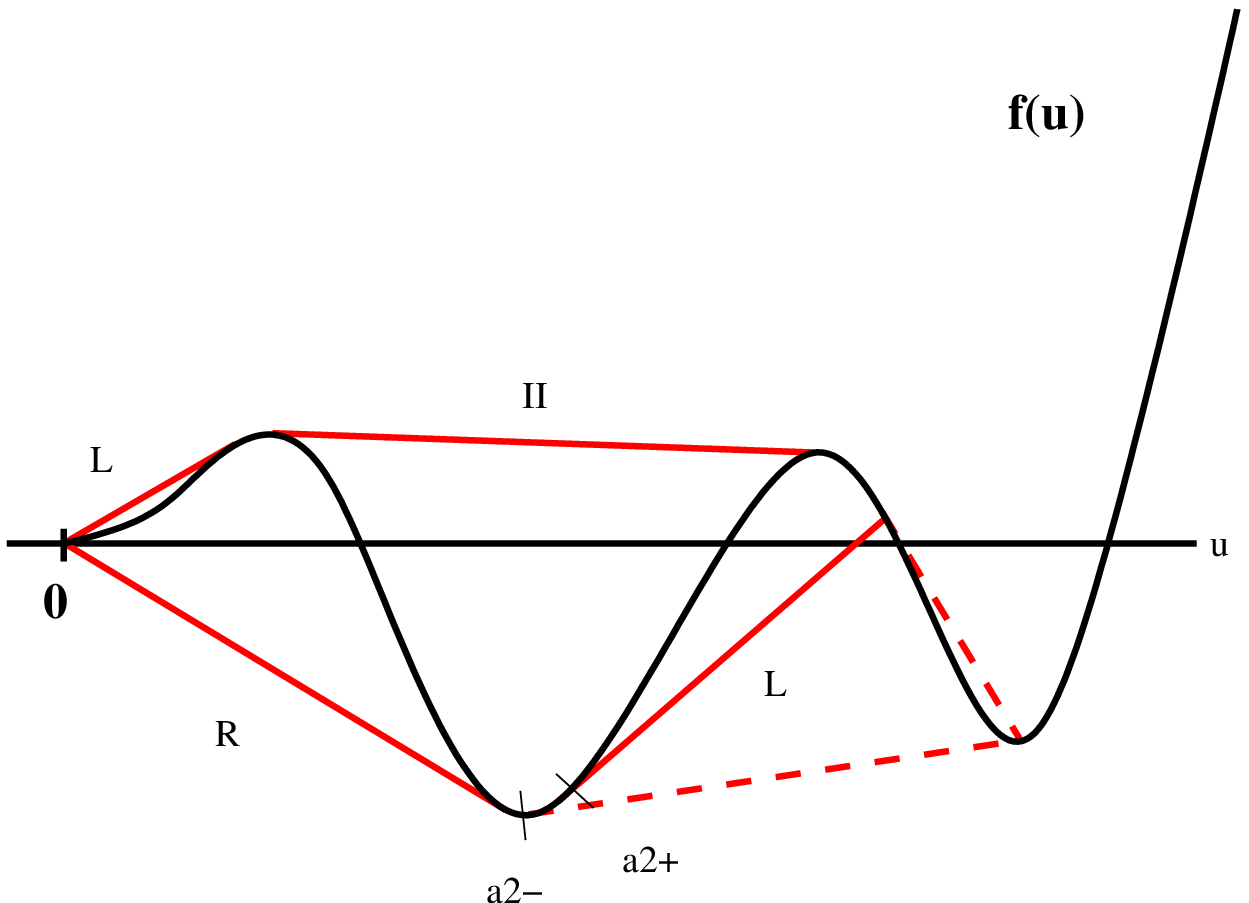}
(d) 1st merging
\\[1.05em]
\vskip .7cm
\includegraphics[width=3.8cm]{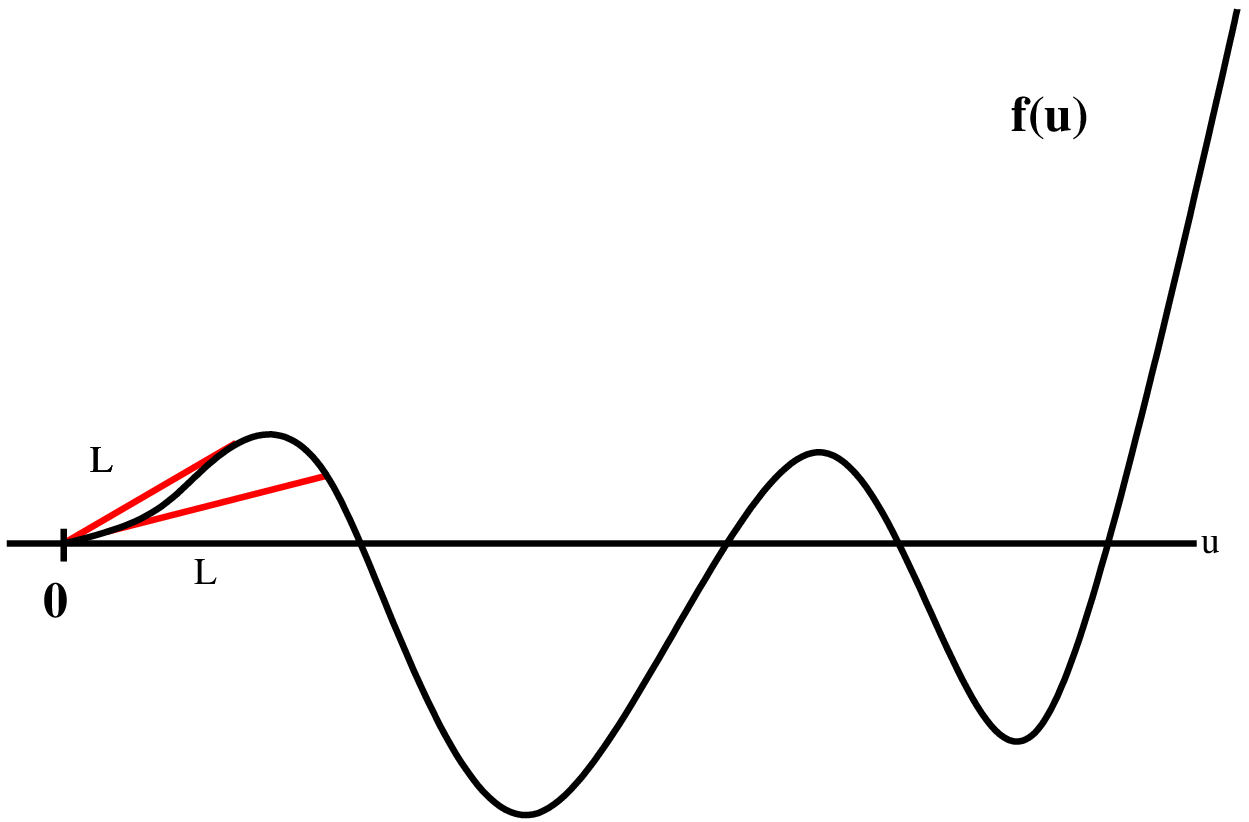}
(g) transforming
\end{minipage}
\hfill
\begin{minipage}[t]{3.7cm}
\centering \psfrag  {R}{R} \psfrag  {L}{L} \psfrag  {G}{G}
\psfrag  {II}{D} \psfrag  {f(u)}{\footnotesize $f(u)$}
\psfrag  {0}{\scriptsize $0$} \psfrag  {u}{\footnotesize $u$}
\psfrag  {a1}{\small $a_1$} \psfrag  {a2}{\small $a_2$}
\psfrag  {a3}{\small $a_3$} \psfrag  {a4}{\small $\bar \rho(t)$}
\psfrag  {b1}{\small $b_1$} \psfrag  {b2}{\small $b_2$}
\includegraphics[width=4cm]{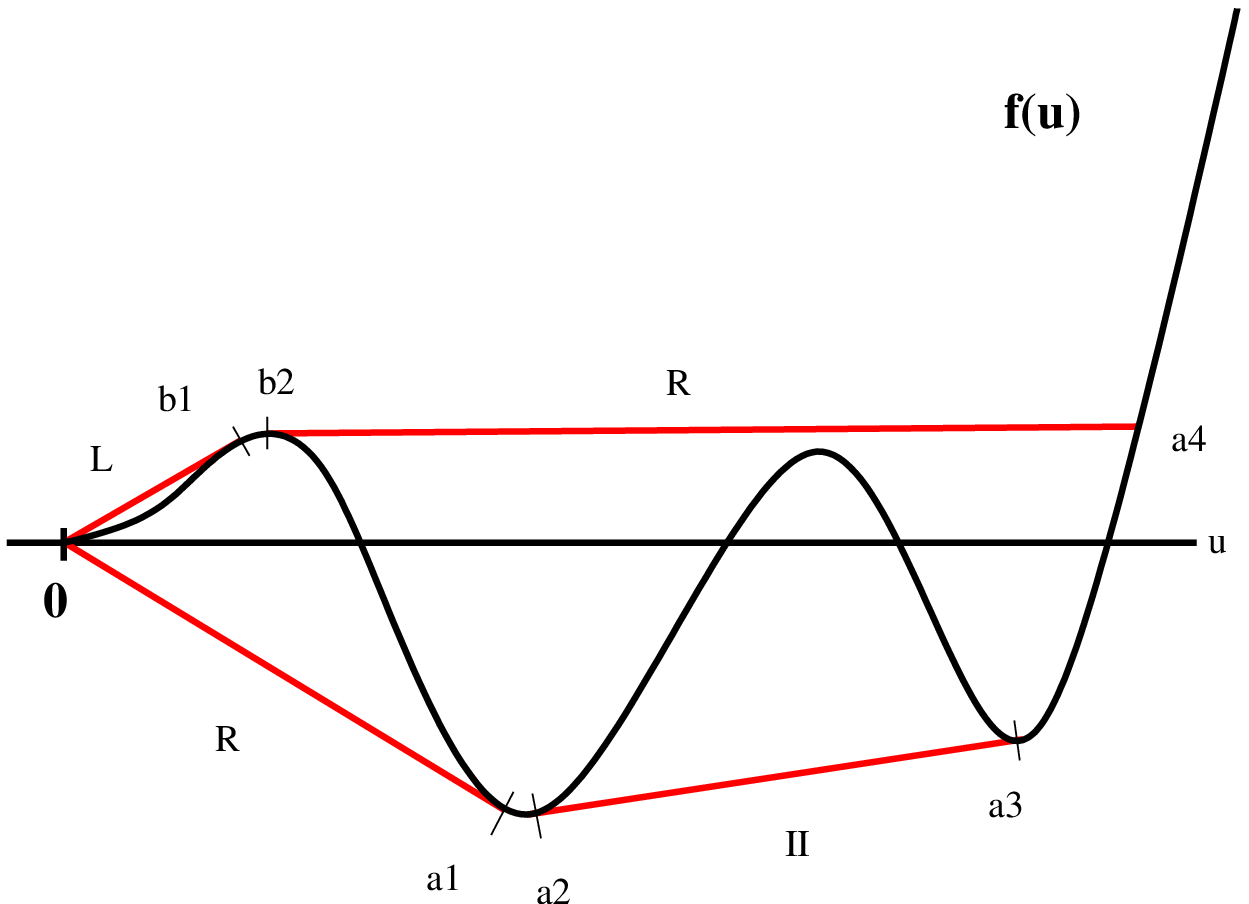}
(b) 1st branching
\\[0.6em]
\includegraphics[width=4cm]{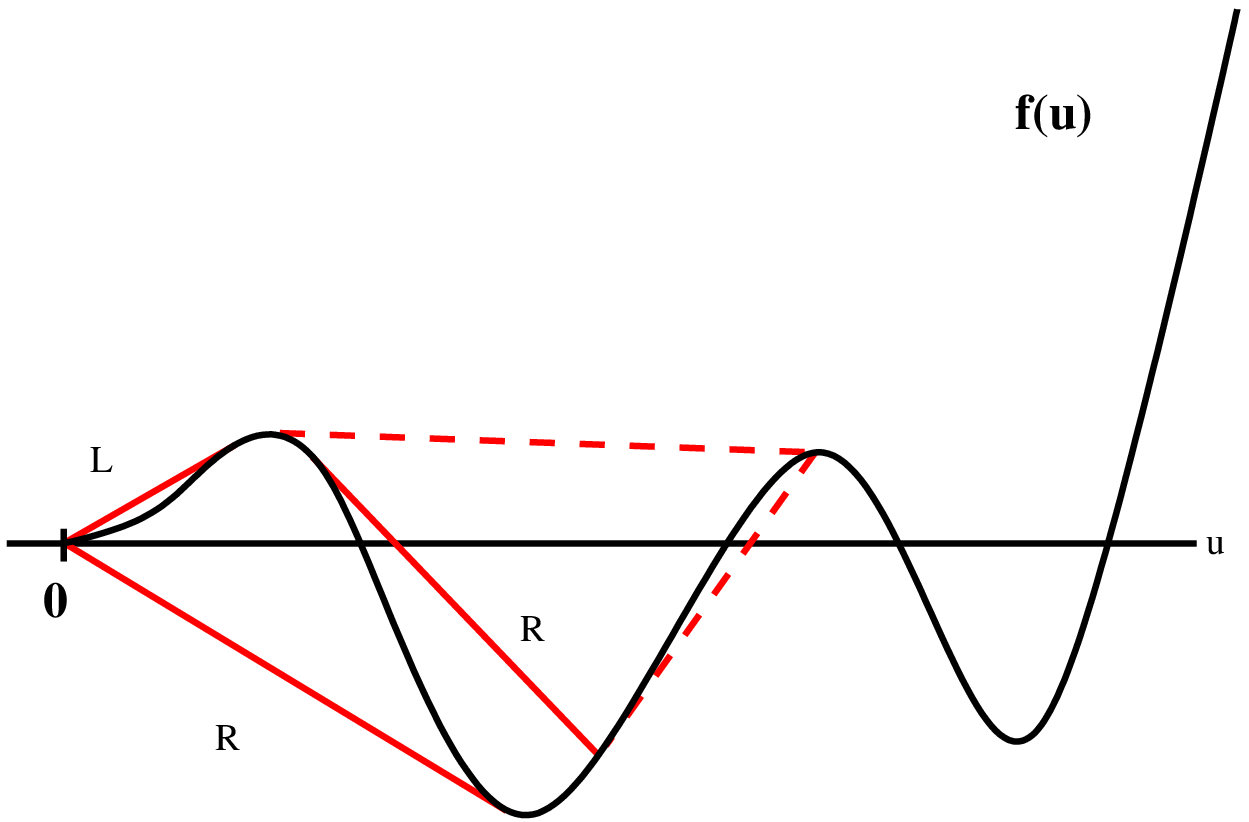}
(e) 2nd merging \vskip .5cm
\includegraphics[width=4cm]{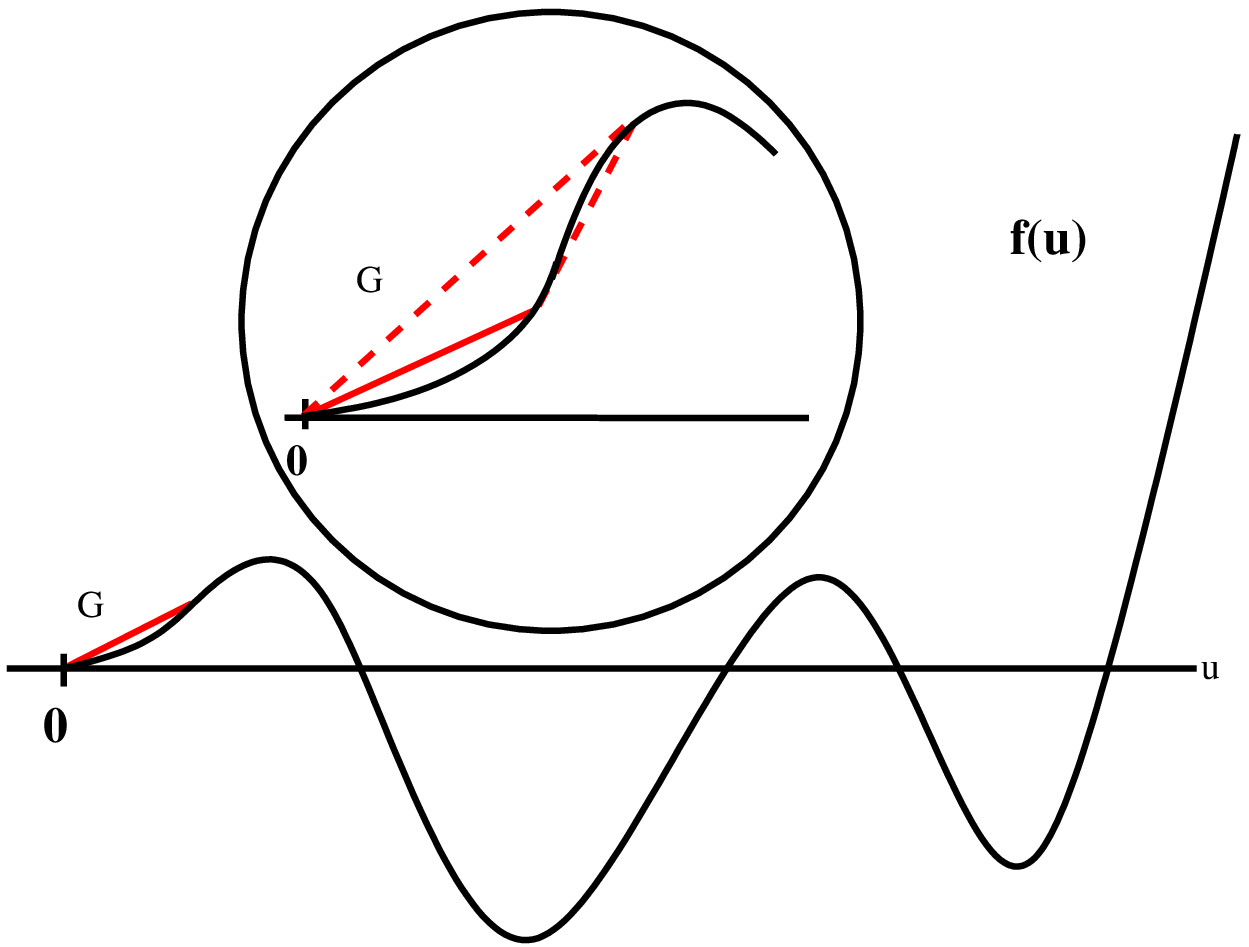}
(h) final merging
\end{minipage}
\hfill
\begin{minipage}[t]{3.5cm}
\centering \psfrag  {R}{\small R} \psfrag  {L}{\small L}
\psfrag  {G}{\small G} \psfrag  {II}{\small D} \psfrag  {a1}{\small $a_1$} \psfrag  {a2}{\small $a_2$} \psfrag  {a3}{\small $a_3$}
\psfrag  {a4}{\small $\bar \rho(t)$} \psfrag  {b1}{\small $b_1$}
\psfrag  {b2}{\small $b_2$} \psfrag  {f(u)}{\footnotesize $f(u)$}
\psfrag  {0}{\scriptsize $0$} \psfrag  {x}{\scriptsize $x$}
\psfrag  {u}{\footnotesize $u$}
\includegraphics[width=4cm]{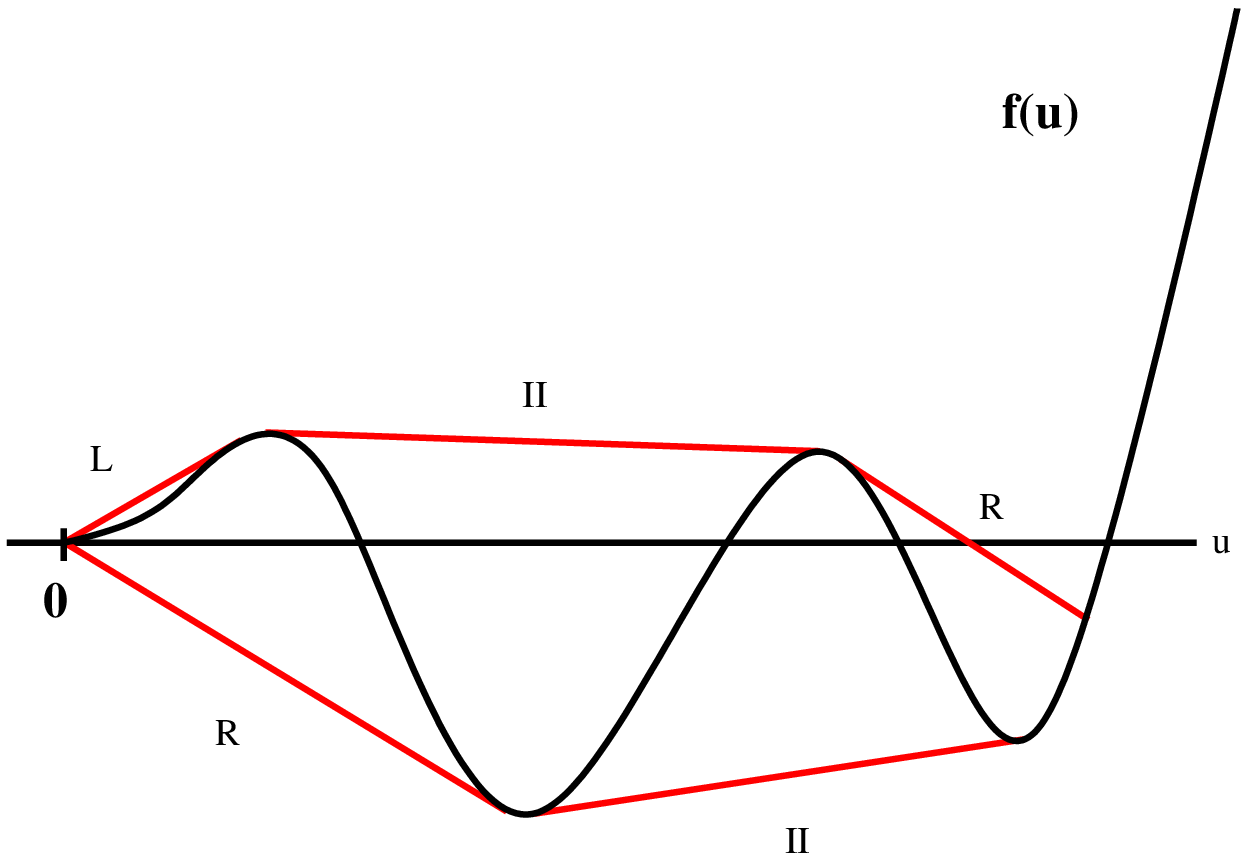}
(c) 2nd branching
\\[0.6em]
\includegraphics[width=4cm]{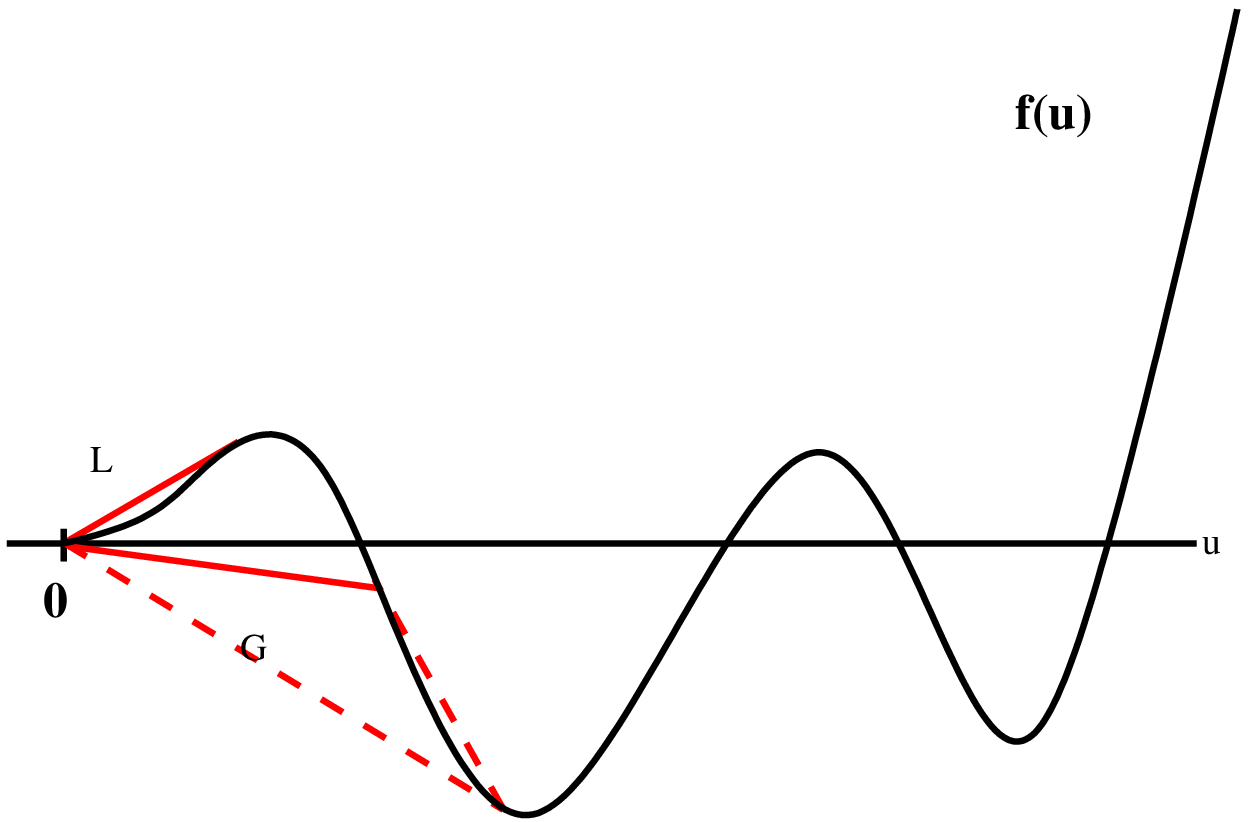}
(f) 3rd merging \vskip 0.1cm
\includegraphics[width=2.4cm]{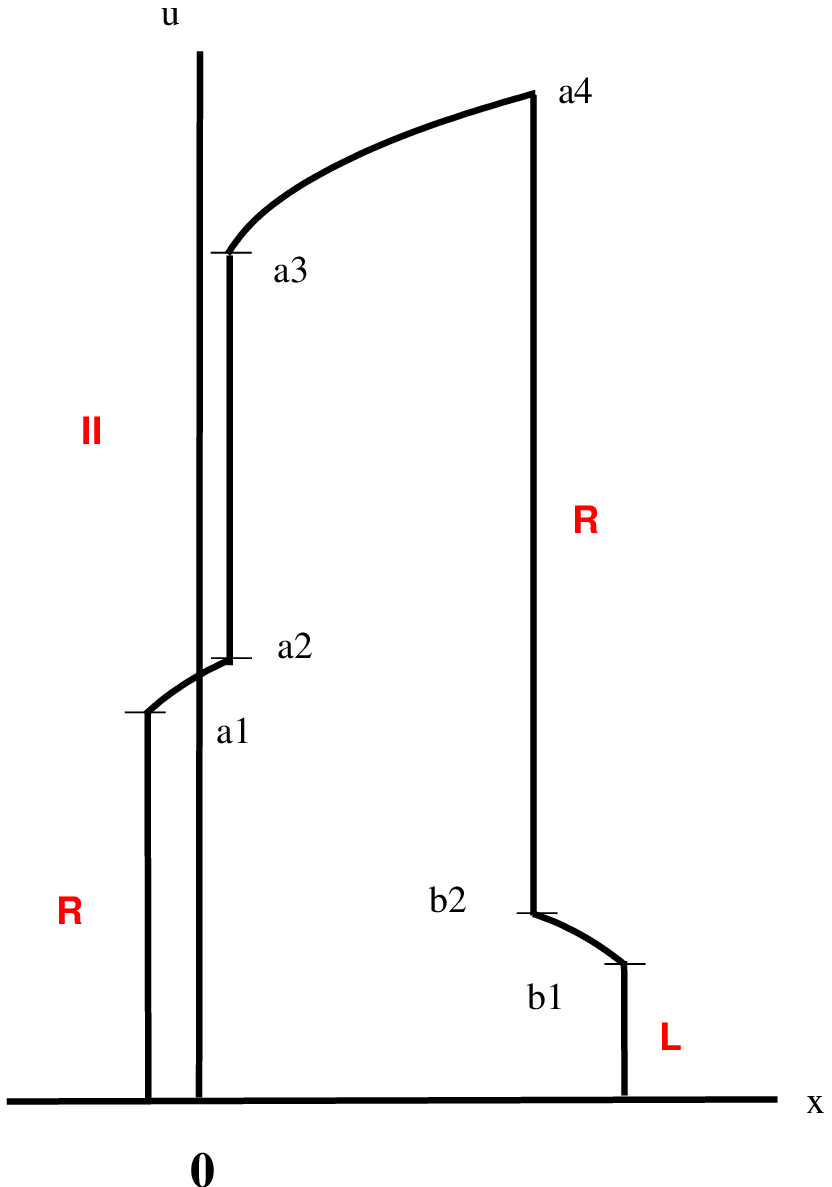}

(i) fundamental solution corresponding to (b)
\end{minipage}\caption{The envelopes of all eight stages. As the maximum of the fundamental solution decreases the corresponding envelopes change. G: genuine shock, R: right contact of single sided, L: left contact of single sided, D: contact of type D. $a_j, b_j$ in (b), (d) for horizontal coordinates but vertical in
(i).} \label{fig6}
\end{figure}

Due to the second hypothesis in (\ref{H1}), one can find a moment $t_0$
such that for all nonnegative $t<t_0$ the concave envelope consists of a single non-horizontal line. In the case there exists a decreasing genuine shock as denoted in Figure \ref{fig6}(a). In addition, there are two increasing shocks from the very beginning, a right and a double contacts. These contacts are connected by a wave fan centered at the origin. These right and double sided contacts move with constant speeds until Figure \ref{fig6}(d) arrives.

The slope of the concave line corresponding to the genuine shock decreases as time $t$ increases and hence the genuine shock curve is not a line as in Figure \ref{fig7}. We can observe two branching phenomena while the concave envelope moves toward \ref{fig6}(d), which are G$\to$L+R in \ref{fig6}(b) and  R$\to$D+R  in \ref{fig6}(c). Note that at every branching point all the curves and the line have the same slope as shown in Figure \ref{fig7}.

After that, we may observe three merging phenomena, D+R$\to$L in \ref{fig6}(d), L+D$\to$R \ref{fig6}(e), and  R+R$\to$G in \ref{fig6}(f). Notice that none of two contacts have the same slope after a merging phenomenon. In addition, centered wave fans appear after the first two merging phenomena. However, if a genuine shock is produced, any kind of rarefaction waves is not produced. This genuine shock moves to left slower and slower, and eventually stops.  Then, the genuine shock turns into a left contact which moves to the right, see Figures \ref{fig6}(g) and \ref{fig7}. Note that it is possible that, after the third merging, Figure \ref{fig6}(f), a left contact may appear instead of the genuine shock and directly go to the stage \ref{fig6}(g), which is the case discussed for Figure \ref{fig5}(b). Finally, this left contact meets the other left contact after a long time and generates a genuine shock as in Figure \ref{fig6}(h). The picture in the magnified circle shows the final merging process. From this moment this genuine shock is a unique discontinuity and persists forever.
\begin{figure}\centering
\psfrag  {I}{{\bf I}} \psfrag  {II}{{\bf D}} \psfrag  {t}{$t$}
\psfrag  {x}{$x$} \psfrag  {R}{{\bf R}} \psfrag  {L}{{\bf L}}
\psfrag  {G}{{\bf G}} \psfrag  {0}{\small $0$}
\psfrag  {1}{\hskip -2mm(a)}\psfrag  {2}{\hskip -2mm(b)} \psfrag  {3}{\hskip -2mm(c)} \psfrag  {4}{\hskip -2mm(d)}
\psfrag  {5}{\hskip -2mm(e)}\psfrag  {6}{\hskip -1mm(f)} \psfrag  {7}{\hskip -2mm(g)} \psfrag  {8}{\hskip -2mm(h)}
\includegraphics[width=10cm,height =9.5cm]{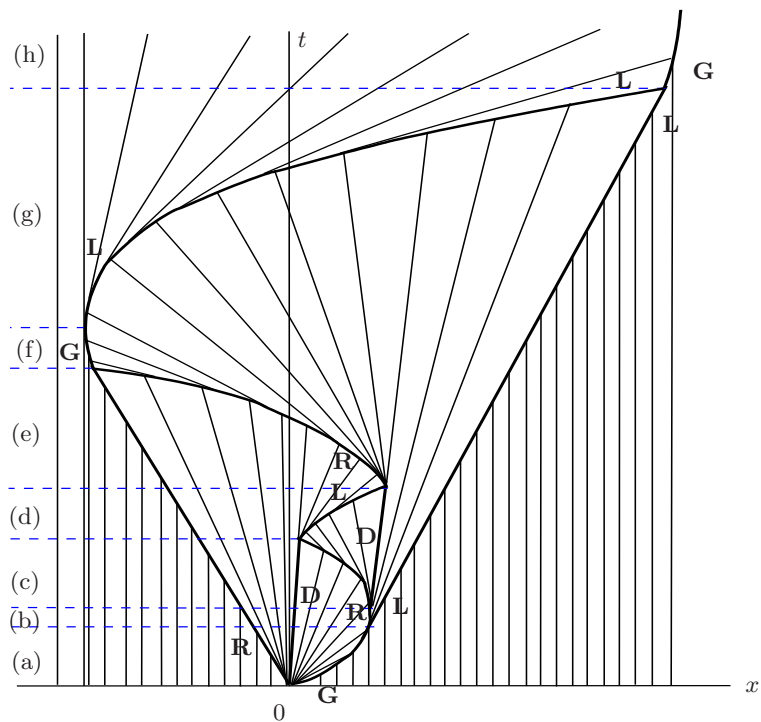}
\caption{Dynamics of characteristics. Letter in the left indicate the stage of each strip corresponding to stages in Figure \ref{fig6}.}\label{fig7}
\end{figure}

\begin{figure}[htb]\vskip -0.6cm\hskip -2cm
 \includegraphics[height=8cm]{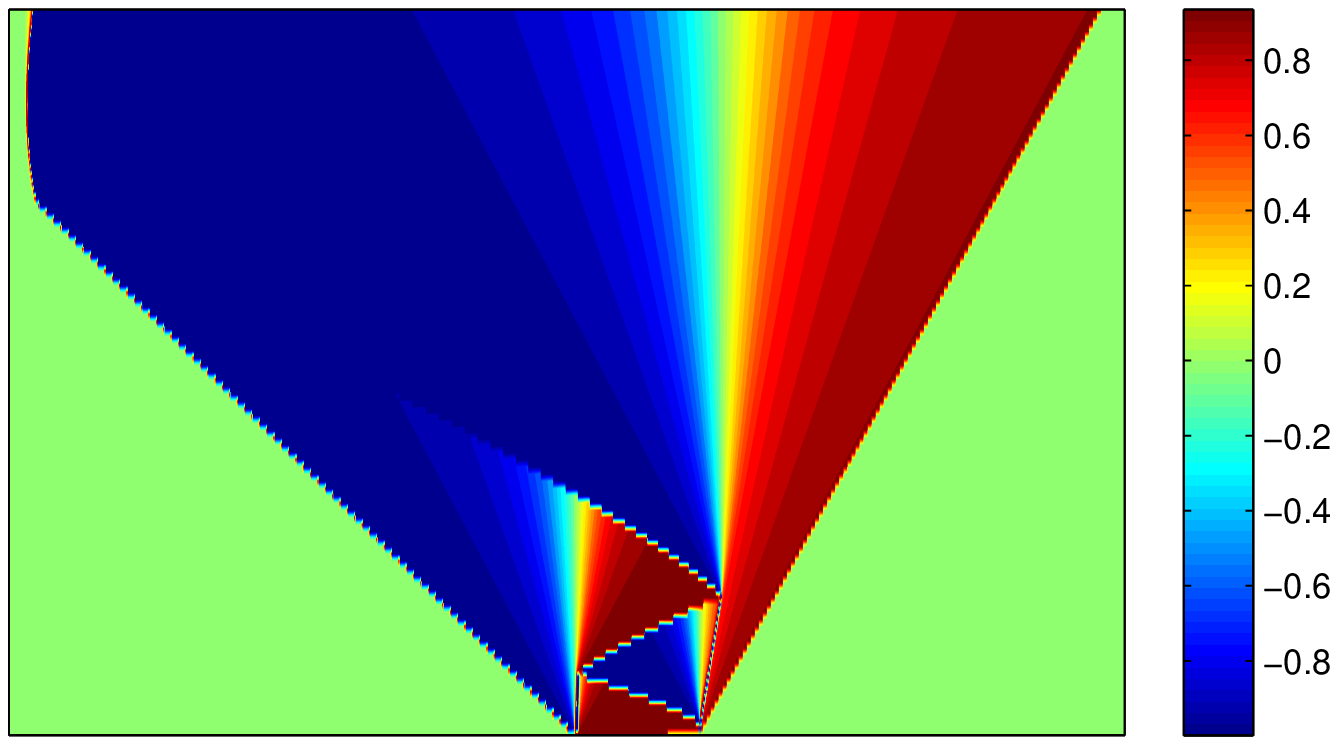}\vskip -0.9cm
  \hskip 5cm 0
\caption{[The horizontal axis is for space $x$ and the vertical axis is for time $t$.] The wave speed $f'(u)$ has been displayed in this figure. This figure clearly shows the interaction of shock waves and emerging centered rarefaction waves at the place where two shock waves are merged.}\label{fig8}
\end{figure}

Note that Figure \ref{fig7} is for an illustration purpose and made under some exaggerations to keep the whole dynamics in a single figure. It seems interesting to compare this illustration with one obtained from an actual numerical solution. In fact, we have computed a fundamental solution numerically and then displayed its dynamics in Figure \ref{fig8}. In the figure we have displayed up to the beginning of Figure \ref{fig6}(g).

The wave speed $f'(u)$ is displayed in Figure \ref{fig8}, where $u$ is a numerically computed fundamental solution. One may observe the shock curves and easily distinguish if it is a left contact or a right contact. This shock curves match with Figure {\ref{fig7} pretty well except the ones near the initial time. For $t>0$ small the evolution of the solution is fast and one may observe numerically if the corresponding part is magnified which is omitted here. In this figure the appearance of the centered rarefaction waves and propagation of discontinuities discussed before are more clearly observed. To produce these figures we used the WENO method. These figures indicate that the theoretical explanation and the numerical simulation give a perfect match.

\begin{acknowledgement}
Authors would like to thank Jaywan Chung and Youngsoo Ha for useful discussions and their help in making figures in this paper.
\end{acknowledgement}

\def\cprime{$'$}
\providecommand{\bysame}{\leavevmode\hbox to3em{\hrulefill}\thinspace}
\providecommand{\MR}{\relax\ifhmode\unskip\space\fi MR }
\providecommand{\MRhref}[2]{%
  \href{http://www.ams.org/mathscinet-getitem?mr=#1}{#2}
}
\providecommand{\href}[2]{#2}

\end{document}